\documentclass[a4paper, 12pt]{article}
\usepackage[a4paper, total={6in,10in}]
{geometry}
\usepackage[utf8]{inputenc}
\usepackage{fullpage}
\usepackage{amsmath}
\usepackage{amssymb}
\usepackage{amsfonts}
\usepackage{amsthm}
\usepackage{graphicx}
\usepackage{xcolor}
\usepackage{float}
\usepackage{cite}
\usepackage{algorithm2e}
\usepackage{enumitem}
\usepackage{mathrsfs}
\newtheoremstyle{dotless}{}{}{\itshape}{}{\bfseries}{}{ }{}
\theoremstyle{dotless}
\newtheorem{theorem}{Theorem}[section]
\newtheorem{corollary}[theorem]{Corollary}
\newtheorem{lemma}[theorem]{Lemma}

\theoremstyle{definition}

\title{Characterizing tricyclic graphs with pendant vertices having largest $A_{\alpha}$-spectral radius}

\author{Mainak Basunia\thanks{Department of Mathematics, Indian Institute of Technology Kharagpur, Kharagpur 721302, India. Email: mainakmaths@iitkgp.ac.in, leo28mynnix@gmail.com}\and Pratima Panigrahi\thanks{Department of Mathematics, Indian Institute of Technology Kharagpur, Kharagpur 721302, India. Email: pratima@maths.iitkgp.ac.in}}

\date{}

\begin{document}
\maketitle
\baselineskip=0.25in

\begin{abstract}
For a graph $G$ with adjacency matrix $A(G)$ and degree diagonal matrix $D(G)$, the $A_{\alpha}$-matrix of $G$ is defined as
\begin{equation*}
    A_{\alpha}(G) = \alpha D(G) + (1- \alpha) A(G), \text{ for any } \alpha \in [0,1].
\end{equation*}
The $A_{\alpha}$-spectral radius of $G$ is the largest eigenvalue of the matrix $A_{\alpha}(G)$. A tricyclic graph of order $n$ is a simple connected graph with $n+2$ edges. In this paper, we characterize the unique graph having the largest $A_{\alpha}$-spectral radius for $\alpha \in [\frac{1}{2}, 1)$ among all tricyclic graphs of order $n$ with $k (\geq 1)$ pendant vertices. As an application, we derive a sufficient spectral condition (alternate to the edge condition) to guarantee the absence of the tricyclic structure in a graph with $k$ pendant vertices. 

\bigskip \noindent \textbf{Keywords:} Tricyclic graph, pendant vertex, $A_{\alpha}$-matrix, $A_{\alpha}$-spectral radius, maximal graph

\noindent {\bf AMS Subject Classification (2010):} 05C50, 05C35
\end{abstract}


\section{Introduction}\label{sec1}
All graphs considered in this paper are simple, undirected and connected. Let $G=$ $(V(G)$, $E(G))$ be a graph with vertex set $V(G)$ and edge set $E(G)$. We denote the number of vertices (order) and the number of edges (size) of $G$ by $n$ and $m$, respectively. The \textit{neighbourhood} of a vertex $v$ in $G$, denoted by $N_G(v)$ (or, $N(v)$), is the set of all vertices in $G$ adjacent to $v$. The \textit{degree} of a vertex $v$, denoted by $d_G(v)$ (or, $d(v)$), is defined as the number $|N_G(v)|$. The \textit{adjacency matrix} of $G$, denoted by $A(G)$, is the square symmetric matrix of order $n$ whose $(i,j)^{th}$ entry is 1 or 0, according to the vertices corresponding to the $i^{th}$ row and $j^{th}$ column are adjacent or not. Let $D(G)$ be the diagonal matrix of order $n$ having the degrees of the vertices of $G$ as its diagonal entries. The matrices $L(G) = D(G) - A(G)$ and $L_S(G) = D(G) + A(G)$ are called the \textit{Laplacian matrix} and \newpage\noindent \textit{signless Laplacian matrix} of $G$, respectively. For any real $\alpha \in [0,1]$, Nikiforov \cite{A_Q-merging_by_Nikiforov} defined the \textit{$A_{\alpha}$-matrix} of $G$ as
\begin{equation}\label{eq1}
A_{\alpha}(G) = \alpha D(G) + (1- \alpha) A(G),\qquad \alpha \in [0,1].
\end{equation}
Clearly, when we take $\alpha = 0, \frac{1}{2}$ and $1$, $A_{\alpha}(G)$ becomes $A(G)$, $\frac{1}{2}L_S(G)$ and $D(G)$, respectively. The \textit{$A_{\alpha}$-spectral radius} of $G$, which is represented as $\rho_{\alpha}(G)$, is the maximal eigenvalue of $A_{\alpha}(G)$. For a connected graph $G$, it follows from \cite{A_Q-merging_by_Nikiforov} that $A_{\alpha}(G)$ is irreducible, $\rho_{\alpha}(G)$ has multiplicity one, and there exists a positive unit eigenvector $X$ (unique up to scaling) corresponding to $\rho_{\alpha}(G)$, which is referred to as the \textit{Perron vector} of $A_{\alpha}(G)$. Throughout the paper, for any vertex $v \in V(G)$, the component of $X$ corresponding to $v$ is denoted by $x_v$. 

The challenge of identifying the graphs for which the maximum or the minimum spectral invariants are achieved in a certain class of graphs was put out by Brualdi and Solheid \cite{spec_rad_by_brualdi_solheid} in $1986$. Since then, the problem has been investigated over a wide range of graph classes and spectral invariants (including spectral radius, Laplacian spectral radius, and signless Laplacian spectral radius); see \cite{spec_rad_of_unicyclic_bicyclic_pendant_by_guo, signless_lap_spec_rad_of_uni_bicyclic_given_girth_by_li_wang_zhou, spec_rad_of_unicyclic_fixed_girth_by_li_guo_shiu, signless_lap_spec_radii_of_c_cyclic_with_pendant_by_liu, signless_lap_spec_radius_of_unicyclic_bicyclic_with_pendant_by_liu_tan_liu, lap_spec_rad_of_tricyclic_pendant_by_guo_wang, spec_rad_of_trees_pendant_by_wu_xiao_hong, max_spec_rad_bicyclic_fixed_girth_by_zhai_wu_shu, Ext_AM_GM_spec_rad_unicyclic_by_niu_zhou_zhang}. The relative findings for $A_{\alpha}$-spectral radius are available in \cite{graphs_of_fixed_order_size_with_max_a_alpha_index_by_chang_tam, spectra_of_join_by_basunia, A_alpha_spec_rad_of_complement_of_bicyclic_tricyclic_by_chen, A_alpha_spec_rad_of_given_size_by_chen_li_huang, A_alpha_spec_rad_given_size_diameter_by_feng_wei, some_spec_inequalities_bipartite_A_alpha_by_li_sun, some_bounds_on_A_alpha_index_fixed_order_size_by_li_sun, alpha_index_of_minimally_2_connected_by_lou_wang_yuan, alpha_index_with_pendant_vertices_by_nikiforov_rojo, two_upper_bounds_on_A_alpha_spec_rad_by_pirzada, sharp_upper_bounds_of_A_alpha_spec_rad_of_cacti_by_wang, on_max_alpha_spec_rad_given_matching_number_by_yuan_shao, clique_trees_zero_forcing_by_jin_li_hou, max_A_alpha_index_with_size_domination_number_by_zhang_guo, ordering_A_alpha_by_wei_feng, some_ext_problems_A_alpha_by_ye_guo_zhang, results_A_alpha_eigenvalues_line_graphs_by_da_silva, max-alpha_of_trees_pendant_by_rojo}.

A \textit{$c$-cyclic graph} of order $n$ and size $m$ is a connected graph such that the relationship $m = n + c -1$ holds true. Specifically, when $c$ takes on the values of $0, 1, 2$, or $3$, the graph is categorized as a \textit{tree, unicyclic, bicyclic}, or \textit{tricyclic} graph, respectively. In $2018$, Lin et al. \cite{A_alpha_spec_rad_by_lin} provided a characterization of graphs that exhibit the largest $A_{\alpha}$-spectral radii among all trees with a specified order and matching number. In $2019$, Li et al. \cite{A_alpha_spec_rad_trees_unicyclic_degree_seq} and in $2023$, Wen et al. \cite{A_alpha_spec_rad_of_bicyclic_degree_seq_by_wen} conducted investigations into the extremal problems that are associated with $A_{\alpha}$-spectral radius of unicyclic and bicyclic graphs, respectively, with the constrain that in each of the collections of graphs, all graphs have a specified degree sequence. Out of all unicyclic (resp. bicyclic) graphs that possess a fixed diameter, the graphs having the highest $A_{\alpha}$-spectral radii were identified by Wang et al. \cite{Alpha_spec_rad__of_unicyclic_bicyclic_fixed_diameter_by_wang}. Recently, Das and Mahato \cite{On_max_a_alpha_spec_rad_of_unicyclic_bicyclic_fixed_girth_pendant_by_das_mahato} presented characterization of graphs with maximal $A_{\alpha}$-spectral radius within the class of unicyclic graphs, as well as within the class of bicyclic graphs, under the constraints of fixed girth or a prescribed number of pendant vertices. 

Inspired by these works, we extend our focus to tricyclic graphs, which represent the next level of $c$-cyclic graphs after trees, unicyclic, and bicyclic classes. Despite significant progress in related settings, no such complete characterization of the graphs attaining the maximum $A_\alpha$-spectral radius exists for tricyclic graphs with pendant vertices. The presence of additional paths attached to the graphs plays a crucial role in determining the $A_\alpha$-spectral radius, as their location around a central vertex strongly influences the components of the Perron vector. Thus, characterizing the extremal graph in this particular family continues the earlier line of research while also providing a genuinely new contribution to the literature.

Beyond the purely theoretical setting, our work  is motivated by practical applications also. It is known that spectral radius of a graph governs important dynamical processes on networks such as synchronization, diffusion, and epidemic spreading; see \cite{Appx_algo_by_saha, Applications_by_nardo}. In chemical graph theory, tricyclic graphs arise naturally as models for molecular structures, and pendant vertices may be interpreted as side chains, while the spectral radius is often linked to molecular stability and reactivity; see \cite{Sharp_tricyclic_isi_index_by_zhang, Characterization_by_kour}. Identifying the graphs with maximum $A_\alpha$-spectral radius among the tricyclic graphs with pendant vertices therefore contributes not only to the general program of extremal spectral graph theory but also provides insights relevant in applied contexts where pendant-like attachments and tricyclic cores occur.

In order to formulate the main results of the paper, we need some definitions. A number of paths are said to be of nearly equal lengths if their pairwise lengths differ by at most $1$. For two positive integers $n$ and $k$, let $\mathscr{T}_n^k$ be the set of all tricyclic graphs of order $n$ with $k$ pendant vertices. We define $\mathcal{T}_3 \in \mathscr{T}_n^k$ as the graph obtained from three triangles sharing exactly one common vertex, by attaching $k$ paths of nearly equal lengths to that common vertex (see Figure \ref{f9}).
\begin{figure}[t]
    \centering
    \includegraphics[scale=.7]{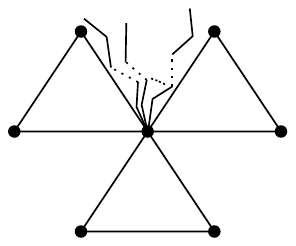}
    \caption{The graph $\mathcal{T}_3$.}
    \label{f9}
    \end{figure}

Now we present the principal finding of the paper in Theorem \ref{r6} below. The proof is discussed in the subsequent sections.

\begin{theorem}\label{r6}
    Let $\alpha \in [\frac{1}{2}, 1)$ and $n,k$ be integers with $1\leq k \leq n-7$. Then out of all graphs in $\mathscr{T}_n^{k}$, $\mathcal{T}_3$ stands out as the sole graph to have the largest $A_{\alpha}$-spectral radius.
\end{theorem}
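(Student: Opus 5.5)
The argument is the usual extremal one: take a maximizer $G^*\in\mathscr{T}_n^k$ of $\rho_\alpha$, with Perron vector $X$, and use grafting and edge-rotation transformations that strictly increase $\rho_\alpha$ to force $G^*\cong\mathcal{T}_3$. The basic tools are standard $A_\alpha$ facts (Nikiforov; Das--Mahato):
\begin{enumerate}
\item[(i)] If $v_1,\dots,v_s\in N(v)\setminus N(u)$ and $x_u\ge x_v$, then moving the edges $vv_i$ to $uv_i$ strictly increases $\rho_\alpha$.
\item[(ii)] If two pendant paths of lengths $p\ge q+2$ hang at the same vertex, replacing them by paths of lengths $p-1,q+1$ strictly increases $\rho_\alpha$, for $\alpha\in[0,1)$.
\item[(iii)] Adding an edge strictly increases $\rho_\alpha$.
\end{enumerate}
The transformations must preserve $n$, the size $n+2$, connectivity, and the number $k$ of pendant vertices. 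That last constraint is what makes the problem nontrivial, and it is why I would avoid any operation that merges pendant paths into a star.

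\emph{Step 1 (the tree part).} Let $C$ be the base of $G^*$, i.e.\ the minimal tricyclic subgraph with no pendant vertices, obtained by repeatedly deleting pendant vertices. Then $G^*$ is $C$ with trees attached. Using (i), I show every attached tree is a pendant path. If a tree vertex $w$ has degree $\ge3$, take $w$ farthest from $C$. It carries at least two pendant paths, and sliding one of them to the end of another keeps $k$ fixed only when it reduces the pendant count, so instead I apply (i) at the vertex with larger Perron entry to move the branching toward $C$. Using (i) again, all pendant paths can be moved to the vertex $u\in V(C)$ of maximum $x_u$. By (ii) their lengths are then nearly equal.

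\emph{Step 2 (the base).} I show $C$ consists of three triangles sharing $u$. The bases of tricyclic graphs fall into a few families: bundles of paths between two vertices, and theta-graphs and cycles glued at a vertex or joined by paths. The strategy is to shorten cycles while lengthening the pendant paths at $u$, which keeps $n$ and the size fixed. Suppose $C$ has a vertex $w\neq u$ of degree $2$ on a cycle, with neighbours $w_1,w_2$ not adjacent. Delete $w$ and add the edge $w_1w_2$; this keeps the size of $C$ minus one. Then reinsert $w$ by subdividing a pendant path at $u$, i.e.\ appending it at the end of the shortest path. The net comparison needs an eigenvector argument; I would use the Rayleigh quotient $X^TA_\alpha X$ together with $x_u\ge x_v$ and the fact that Perron entries decrease along pendant paths. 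Repeating this reduces $C$ to a base in which every degree-$2$ vertex lies on a triangle, or $C=K_4$-like or $K_5^-$-like dense pieces. The remaining small bases — for example $K_4$ with a triangle, or two triangles joined with an extra chord — are handled by (i) to move edges onto $u$, since $x_u$ is maximal, giving the friendship-type graph of three triangles at $u$. The hypothesis $k\le n-7$ guarantees $C$ has exactly $7$ vertices with $u$ as the hub of degree $6+k$.

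\emph{Main obstacle.} The step I expect to be hardest is the cycle-shortening in Step 2. Deleting a cycle vertex and lengthening a pendant path is not a monotone operation in general, and the inequality is delicate. I would try an eigen-equation argument using $\rho_\alpha x_w=\alpha d_w x_w+(1-\alpha)\sum x$, and this is where $\alpha\ge\frac12$ enters: it gives $\rho_\alpha\ge\alpha(\Delta+1)$ and comparisons like $x_u\ge x_{w_i}$ large enough relative to the tail entries. A fallback is to bound the characteristic polynomial of the path-plus-cycle quotient matrix directly. Finally, I must compare the dense bases, such as $K_4$ plus a pendant triangle versus three triangles at $u$. I would do this via (i): for instance, rotate an edge $w_1w_2$ with $x_u\ge x_{w_1}$ to $uw_2$ whenever $uw_2\notin E$. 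This concludes that $\mathcal{T}_3$ is the unique maximizer.
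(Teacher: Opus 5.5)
Your plan has a genuine gap where different cycle classes must be compared. Your toolkit (edge rotation, balancing pendant paths, shortening internal paths) is the one the paper uses to find the best graph \emph{inside} each cycle-configuration class. Exhausting these monotone operations does not leave $\mathcal{T}_3$ alone. It leaves four candidates:
$\mathcal{T}_7$ ($K_4$ core, hub degree $k+3$), $\mathcal{T}_6$ ($K_{1,1,3}$ core, hub degree $k+4$), $\mathcal{T}_4$ (theta with paths of lengths $1,2,2$ plus a triangle at the hub, hub degree $k+5$), and $\mathcal{T}_3$ (hub degree $k+6$).
\begin{itemize}
\item In the first three, the hub is already adjacent to every other core vertex, so your rotation of $w_1w_2$ to $uw_2$ with $uw_2\notin E$ never applies.
\item Their cores have $4,5,6$ rather than $7$ vertices, so reaching $\mathcal{T}_3$ would mean pulling vertices off pendant paths into the core, which lowers $\rho_\alpha$.
\item The rotations that would bring a pendant-path vertex into the core either require a Perron-entry inequality you cannot guarantee, or change the number of pendant vertices.
\end{itemize}
Your remark that $k\le n-7$ forces $|V(C)|=7$ is a non sequitur; that hypothesis only guarantees $\mathcal{T}_3$ exists.

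The paper settles this comparison (Lemma~\ref{r1}) with explicit spectral bounds. It uses $\rho_\alpha(\mathcal{T}_3)\ge\alpha(k+6)+(1-\alpha)^2/\alpha$ from Lemma~\ref{lem2}. Against this it sets $\max_v\{\alpha d(v)+(1-\alpha)q(v)\}$ (Lemma~\ref{lem7}) for $\mathcal{T}_6$ and $\mathcal{T}_7$. For $\mathcal{T}_4$ it uses $(2\alpha-1)\Delta+(1-\alpha)\lambda(L_S(\mathcal{T}_4))$ together with $\lambda(L_S(\mathcal{T}_4))\le k+7$. This is the only place $\alpha\ge\frac12$ enters. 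It is needed even inside the three-cycle class: the configuration where $C_r,C_s$ and $C_s,C_t$ share distinct single vertices rotates into the four-cycle class, and is excluded only through $\rho_\alpha(\mathcal{T}_4)<\rho_\alpha(\mathcal{T}_3)$. Also, the bound $\rho_\alpha\ge\alpha(\Delta+1)$ you cite is false for $\alpha\in(\frac12,1)$; the star $K_{1,\Delta}$ violates it.

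Conversely, the step you call the main obstacle is routine. Deleting a degree-$2$ vertex $w$ with nonadjacent neighbours and adding $w_1w_2$ reverses a subdivision of an internal-path edge, so $\rho_\alpha$ strictly increases by Lemma~\ref{lem4}. Re-attaching $w$ at the end of a pendant path increases it again by Lemma~\ref{lem1}. This holds for all $\alpha\in[0,1)$ and is exactly Lemma~\ref{r9}; no Rayleigh-quotient estimate is needed.

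Two smaller issues:
\begin{itemize}
\item In Step 1, suppose the branch vertex $w$ has larger Perron entry than its parent, and the parent has degree $2$. Rotating the parent's other edge onto $w$ then creates a new pendant vertex. Instead, take $w$ nearest the attachment vertex and contract the internal path between them, as the paper does.
\item Your sample remaining bases ($K_4$ with a triangle attached, $K_5$ minus an edge) have cyclomatic numbers $4$ and $5$, so they are not tricyclic.
\end{itemize}
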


An immediate consequence of Theorem \ref{r6} is Corollary \ref{r7}, which serves as a sufficient condition (alternate condition to the edge criterion) to ensure that a given graph with pendant vertices does not possess tricyclic structure.

\begin{corollary}\label{r7}
    Suppose $G$ is a graph having $k$ pendant vertices. If $G \neq \mathcal{T}_3$, and there exists some $\alpha \in [\frac{1}{2},1)$ for which the condition $\rho_{\alpha} (G) \geq \rho_{\alpha}(\mathcal{T}_3)$ holds, then $G$ is not tricyclic.
\end{corollary}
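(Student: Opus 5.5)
The plan is to argue by contraposition and reduce everything to Theorem~\ref{r6}. Throughout, $\mathcal{T}_3$ is understood as the graph of Figure~\ref{f9} with the same order $n$ and the same number $k$ of pendant vertices as $G$. For $\mathcal{T}_3$ to be defined we need $1\le k\le n-7$: the three triangles use $7$ vertices and at least one path is attached. These are exactly the parameter ranges in Theorem~\ref{r6}, so I would state at the outset that the corollary is read under the same standing hypotheses on $n$ and $k$.

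So suppose $G$ is tricyclic, has $k$ pendant vertices, and $G\neq\mathcal{T}_3$. The aim is to show that $\rho_{\alpha}(G)<\rho_{\alpha}(\mathcal{T}_3)$ for every $\alpha\in[\frac12,1)$. This contradicts the hypothesis that $\rho_{\alpha}(G)\ge\rho_{\alpha}(\mathcal{T}_3)$ for some such $\alpha$.

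The steps, in order, are as follows.
\begin{enumerate}
\item Since $G$ is connected with $n+2$ edges and has $k$ pendant vertices, $G\in\mathscr{T}_n^k$ by definition.
\item Fix any $\alpha\in[\frac12,1)$. By Theorem~\ref{r6}, $\mathcal{T}_3$ is the \emph{unique} graph in $\mathscr{T}_n^k$ attaining the maximum of $\rho_{\alpha}$ over this class.
\item Since $G\neq\mathcal{T}_3$ (as unlabelled graphs, i.e.\ up to isomorphism), uniqueness gives the strict inequality $\rho_{\alpha}(G)<\rho_{\alpha}(\mathcal{T}_3)$. Note that $\le$ alone would not suffice.
\item Since $\alpha$ was arbitrary in $[\frac12,1)$, no such $\alpha$ can satisfy $\rho_{\alpha}(G)\ge\rho_{\alpha}(\mathcal{T}_3)$. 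Hence $G$ is not tricyclic.
\end{enumerate}

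There is no genuine obstacle; the only point needing care is the bookkeeping. First, the comparison graph $\mathcal{T}_3$ must have the same $n$ and $k$ as $G$, so that both lie in the same class $\mathscr{T}_n^k$. Second, the range $1\le k\le n-7$ must be in force so that Theorem~\ref{r6} applies. Third, the strictness of the inequality comes from the uniqueness clause ("sole graph") in Theorem~\ref{r6}, not merely from maximality.
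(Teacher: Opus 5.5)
Your proposal is correct and follows the paper's approach. The paper gives no separate proof and simply calls Corollary~\ref{r7} an immediate consequence of Theorem~\ref{r6}; your contrapositive argument spells out that consequence, taking $\mathcal{T}_3$ with the same $n$ and $k$ under $1\le k\le n-7$ and using the uniqueness clause to get the strict inequality.
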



\section{Preliminaries}
We use the notations $P_n$, $C_n$ and $K_n$ for a path, cycle and complete graph on $n$ vertices, respectively. $K_{a,b}$ represents the complete bipartite graph with $a$ vertices in one partite set, and $b$ vertices in the other. Given a graph $G$, $d(u,v)$ denotes the distance between the vertices $u$ and $v$ in $G$. A path $P$ in $G$ starting at vertex $v_0$ is called a \textit{pendant path} if $d(v_0) \geq 2$, the degree of the other end vertex is $1$, and all the remaining vertices, if any, of $P$ are of degree $2$ in $G$. A tree $T$ attached at $v_0 \in G$ is called a \textit{pendant tree} if $d(v_0) \geq 2$, and all other vertices of $T$ have no neighbors outside $T$. By an \textit{internal path} in this paper, we mean the following two structures in $G$: $(i)$ a path $P$ with degrees of both the end vertices at least $3$, and the remaining vertices, if any, are of degree $2$ each in $G$; $(ii)$ a cycle $C$ having starting and end vertex as $v_0$ with $d(v_0) \geq 3$, and remaining vertices are of degree $2$ each in $G$.

Now, we will look at some of the well-known results from the literature, which we will use to come up with the key findings of the paper.

\begin{lemma}\textup{\cite[Corollary $13$]{A_Q-merging_by_Nikiforov}}\label{lem2}
    If $G$ is a graph with maximum degree $\Delta$, then
    \begin{align*}
         \rho_{\alpha}(G) \geq 
        \begin{cases}
            \alpha (\Delta +1), & \text{ if $\alpha \in [0,\frac{1}{2}]$}\\
            \alpha \Delta + \frac{(1-\alpha)^2}{\alpha}, & \text{ if $\alpha \in [\frac{1}{2},1)$}
        \end{cases}
    \end{align*}
\end{lemma}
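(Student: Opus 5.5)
The statement to prove is Lemma~\ref{lem2}, Nikiforov's lower bound $\rho_\alpha(G)\ge \alpha(\Delta+1)$ for $\alpha\in[0,\frac12]$ and $\rho_\alpha(G)\ge \alpha\Delta+\frac{(1-\alpha)^2}{\alpha}$ for $\alpha\in[\frac12,1)$. The plan is to reduce to a star and then estimate the spectral radius of the star.

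\textbf{Reduction to a star.} Let $v$ be a vertex of degree $\Delta$. The star $K_{1,\Delta}$ formed by $v$ and its neighbours is a subgraph of $G$. The matrix $A_\alpha(G)$ is nonnegative and entrywise dominates $A_\alpha(K_{1,\Delta})$, after padding the latter with zero rows and columns for the other vertices. This holds because every degree in $G$ is at least the corresponding degree in the star, and every adjacency of the star is an adjacency of $G$. By Perron--Frobenius monotonicity for nonnegative matrices, $\rho_\alpha(G)\ge\rho_\alpha(K_{1,\Delta})$.

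\textbf{Spectral radius of the star.} Take an eigenvector with value $x$ on the centre and $y$ on each leaf. The eigen-equations are
\[
\rho x=\alpha\Delta x+(1-\alpha)\Delta y,\qquad \rho y=\alpha y+(1-\alpha)x .
\]
Eliminating gives $(\rho-\alpha\Delta)(\rho-\alpha)=(1-\alpha)^2\Delta$, so
\[
\rho_\alpha(K_{1,\Delta})=\tfrac12\Bigl(\alpha(\Delta+1)+\sqrt{\alpha^2(\Delta+1)^2+4\Delta(1-2\alpha)}\Bigr).
\]
It remains to compare this largest root with the two claimed bounds. Write $f(t)=(t-\alpha\Delta)(t-\alpha)-(1-\alpha)^2\Delta$. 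Since $\rho_\alpha(K_{1,\Delta})$ is the largest root of $f$, it suffices to check that $f(t_0)\le 0$ at the claimed bound $t_0$, which must also be at least the vertex $\frac{\alpha(\Delta+1)}2$ of the parabola. The bounds are trivial when $\Delta=0$, so assume $\Delta\ge1$.

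\textbf{Case $\alpha\le\frac12$.} Take $t_0=\alpha(\Delta+1)$. Then $f(t_0)=\alpha\cdot\alpha\Delta-(1-\alpha)^2\Delta=\Delta(2\alpha-1)\le0$, so $\rho_\alpha(K_{1,\Delta})\ge\alpha(\Delta+1)$.

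\textbf{Case $\alpha\ge\frac12$.} Take $t_0=\alpha\Delta+\frac{(1-\alpha)^2}{\alpha}$. Then
\[
f(t_0)=\frac{(1-\alpha)^2}{\alpha}\Bigl(\alpha\Delta+\frac{(1-\alpha)^2}{\alpha}-\alpha\Bigr)-(1-\alpha)^2\Delta=\frac{(1-\alpha)^2}{\alpha}\Bigl(\frac{(1-\alpha)^2}{\alpha}-\alpha\Bigr).
\]
This is $\le0$ because $(1-\alpha)^2\le\alpha^2$ when $\alpha\ge\frac12$. Also $t_0\ge\alpha\Delta\ge\frac{\alpha(\Delta+1)}2$ for $\Delta\ge1$, so $t_0$ lies to the right of the vertex. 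Hence $\rho_\alpha(K_{1,\Delta})\ge t_0$.

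\textbf{Main obstacle.} The only real subtlety is the monotonicity step: the degree part of $A_\alpha(G)$ changes when passing to a subgraph. It is handled by observing that the degree part only increases in $G$ as well, so entrywise domination holds.
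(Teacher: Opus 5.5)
Your argument is correct for $\Delta\ge 1$, and it follows essentially the same route as the original source (the paper itself only cites this result). Nikiforov also passes to the star $K_{1,\Delta}$ by subgraph monotonicity and obtains the intermediate bound $\rho_\alpha(G)\ge\frac12\bigl(\alpha(\Delta+1)+\sqrt{\alpha^2(\Delta+1)^2+4\Delta(1-2\alpha)}\bigr)$; this is his Proposition 12, which the paper invokes for the equality case in Claim 3 of Lemma~\ref{r1}. He then simplifies it by the same elementary comparison you carry out via $f(t_0)\le 0$.

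One correction: the case $\Delta=0$ is not trivial; the bound is actually false there for $\alpha>0$. An edgeless graph has $\rho_\alpha=0$, while $\alpha$ and $(1-\alpha)^2/\alpha$ are both positive. Your quadratic would also give the spurious root $\alpha$ in that case, since there are no leaves. So the statement tacitly assumes $G$ has an edge, which is harmless in the paper, where it is applied with $\Delta=k+6$.
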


\begin{lemma}\textup{\cite[Proposition $14$]{A_Q-merging_by_Nikiforov}}\label{lem1}
    Let $H$ be a proper subgraph of a connected graph $G$, and $\alpha \in [0, 1)$. Then $\rho_{\alpha}(H) < \rho_{\alpha}(G)$.
\end{lemma}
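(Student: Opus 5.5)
We prove the lemma by comparing the two $A_\alpha$-matrices entrywise and then applying the Rayleigh quotient together with Perron--Frobenius theory for the irreducible matrix $A_\alpha(G)$.

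\textbf{Setting up the comparison.} Let $V(G)=\{v_1,\dots,v_n\}$. We regard $H$ as a spanning subgraph of $G$ by adding the vertices of $V(G)\setminus V(H)$ to $H$ as isolated vertices. Write $M$ for the $n\times n$ symmetric matrix $\alpha D(H)+(1-\alpha)A(H)$ indexed by $V(G)$; its rows and columns at the added isolated vertices are zero. The added vertices contribute only the eigenvalue $0$, so $\rho(M)=\rho_{\alpha}(H)$ because $\rho_\alpha(H)\ge 0$. Put $B=A_{\alpha}(G)$.

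Since $E(H)\subseteq E(G)$, every degree satisfies $d_H(v)\le d_G(v)$ and every adjacency entry of $H$ is at most the corresponding entry of $G$. Hence $0\le M\le B$ entrywise.

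Moreover $M\neq B$. If $H$ misses an edge of $G$, the corresponding off-diagonal entry drops from $1-\alpha>0$ to $0$; here $\alpha<1$ is used. If $H$ has all edges of $G$, then it must miss a vertex. That vertex has positive degree in $G$, since $G$ is connected with at least two vertices (a one-vertex graph has no nonempty proper subgraph). So the corresponding diagonal entry drops from $\alpha d_G(v)$ to $0$, or an incident edge is missing; in either case some entry drops.

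Because $G$ is connected and $\alpha<1$, $B$ is a nonnegative irreducible symmetric matrix. By the facts quoted from \cite{A_Q-merging_by_Nikiforov}, it has a positive unit Perron vector $X$ for the simple eigenvalue $\rho_\alpha(G)$.

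\textbf{Weak inequality.} $M$ is symmetric and nonnegative, so it has a nonnegative unit eigenvector $Y$ for $\rho(M)$. By the Rayleigh principle and $M\le B$,
\begin{equation*}
\rho_{\alpha}(H)=Y^{T}MY\le Y^{T}BY\le \rho_{\alpha}(G).
\end{equation*}
The middle inequality holds because $Y\ge 0$ and $B-M\ge 0$.

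\textbf{Strictness.} Suppose, for contradiction, that $\rho_\alpha(H)=\rho_\alpha(G)$. Then both inequalities above are equalities.

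From $Y^TBY=\rho_\alpha(G)$ and the equality case of the Rayleigh quotient for the symmetric matrix $B$, $Y$ is an eigenvector of $B$ for $\rho_\alpha(G)$. This eigenvalue is simple, so $Y=\pm X$, and $Y\ge 0$ forces $Y=X>0$.

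The first equality gives $X^{T}(B-M)X=0$. Every entry of $B-M$ is nonnegative, at least one is positive, and every entry of $X$ is positive. Therefore $X^{T}(B-M)X\ge (B-M)_{ij}\,x_{v_i}x_{v_j}>0$ for any position $(i,j)$ with $(B-M)_{ij}>0$, a contradiction. Hence $\rho_{\alpha}(H)<\rho_{\alpha}(G)$.

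The only delicate point is the strictness step. It depends on the positivity of the Perron vector, which is where connectedness of $G$ and $\alpha<1$ are essential, and on checking carefully that $M\neq B$ in every case, including when $H$ omits only vertices.
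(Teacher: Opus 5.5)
Your proof is correct. The paper gives no proof of its own: it imports the statement from Nikiforov, where it follows from the same Perron--Frobenius reasoning for the nonnegative irreducible matrix $A_{\alpha}(G)$, namely entrywise domination $A_{\alpha}(H)\le A_{\alpha}(G)$ with at least one strict entry, plus positivity of the Perron vector. Your Rayleigh-quotient argument is essentially a self-contained version of that standard route.

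One small tidying point: your second case (all edges kept, a vertex missing) cannot occur. A vertex of $G$ missing from $H$ has an incident edge in $G$, and that edge cannot lie in $H$. So $M\neq B$ always comes from an off-diagonal entry $1-\alpha>0$. This also removes any reliance on the diagonal drop $\alpha d_G(v)$, which is zero when $\alpha=0$.
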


\begin{lemma}\textup{\cite[Lemma $2.2$]{A_alpha_spec_rad_by_xue_lin_liu_shu}}\label{lem3}
    Suppose $G$ is a connected graph and $u,v$ are two of its vertices. Let $N_1$ be the set of those adjacent vertices of $v$ (excluding $u$, if $u$ is a adjacent vertex of $v$), which are not adjacent to $u$. Take any non empty subset $N_2$ of $N_1$. We construct $G^{\prime}$ by removing all the edges between the vertex $v$ and each vertex of $N_2$ in $G$, and then adding edges between $u$ and all the vertices of $N_2$. We denote the $A_{\alpha}$-Perron vector of $G$ by $X$; $x_u$ and $x_v$ are the components of $X$ corresponding to the vertices $u$ and $v$, respectively. If $\alpha \in [0,1)$ and $x_u \geq x_v$, then it follows that $\rho_{\alpha}(G^{\prime}) > \rho_{\alpha}(G)$.
\end{lemma}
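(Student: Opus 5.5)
The plan is to compare the Rayleigh quotients of $A_{\alpha}(G)$ and $A_{\alpha}(G^{\prime})$ at the Perron vector $X$ of $G$, and then to rule out equality by looking at the eigen-equation at $u$. The starting point is the standard edge decomposition of the quadratic form: for any graph $H$ and any real vector $Y$,
\begin{equation*}
Y^{T}A_{\alpha}(H)Y=\sum_{ij\in E(H)}\bigl(\alpha(y_i^2+y_j^2)+2(1-\alpha)y_iy_j\bigr).
\end{equation*}

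First, $G^{\prime}$ is a simple graph. Every $w\in N_2$ is non-adjacent to $u$, and $u\notin N_2$. Also, $G$ and $G^{\prime}$ have the same edge set except that each edge $vw$ with $w\in N_2$ is replaced by $uw$. If $u$ is adjacent to $v$, that edge is untouched.

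Take $X$ to be the unit Perron vector of $G$, so that $\rho_{\alpha}(G)=X^{T}A_{\alpha}(G)X$. Each replaced edge changes the quadratic form by
\begin{equation*}
\alpha(x_u^2-x_v^2)+2(1-\alpha)x_w(x_u-x_v)=(x_u-x_v)\bigl(\alpha(x_u+x_v)+2(1-\alpha)x_w\bigr)\geq 0,
\end{equation*}
because $x_u\geq x_v$ and all entries of $X$ are positive. Summing over $w\in N_2$ and applying the Rayleigh principle to the symmetric matrix $A_{\alpha}(G^{\prime})$ gives
\begin{equation*}
\rho_{\alpha}(G^{\prime})\geq X^{T}A_{\alpha}(G^{\prime})X\geq X^{T}A_{\alpha}(G)X=\rho_{\alpha}(G).
\end{equation*}
This argument does not need $G^{\prime}$ to be connected.

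The main step is strictness. Suppose $\rho_{\alpha}(G^{\prime})=\rho_{\alpha}(G)=:\rho$. Then $X$ is a unit vector attaining the maximum of the Rayleigh quotient of $A_{\alpha}(G^{\prime})$. Hence $X$ is an eigenvector of $A_{\alpha}(G^{\prime})$ for $\rho$; this holds for the largest eigenvalue of any symmetric matrix, so connectivity is again not needed.

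Now compare the $u$-th coordinates of the two eigen-equations. In $G$,
\begin{equation*}
\rho x_u=\alpha d_G(u)x_u+(1-\alpha)\sum_{z\in N_G(u)}x_z.
\end{equation*}
In $G^{\prime}$ we have $d_{G^{\prime}}(u)=d_G(u)+|N_2|$ and $N_{G^{\prime}}(u)=N_G(u)\cup N_2$, a disjoint union. Therefore
\begin{equation*}
\rho x_u=\alpha\bigl(d_G(u)+|N_2|\bigr)x_u+(1-\alpha)\Bigl(\sum_{z\in N_G(u)}x_z+\sum_{w\in N_2}x_w\Bigr).
\end{equation*}
Subtracting the first equation from the second yields
\begin{equation*}
0=\alpha|N_2|x_u+(1-\alpha)\sum_{w\in N_2}x_w.
\end{equation*}
The right-hand side is strictly positive, since $N_2\neq\emptyset$, $\alpha<1$ and $X>0$. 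This is a contradiction, so $\rho_{\alpha}(G^{\prime})>\rho_{\alpha}(G)$.

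The only delicate points are the ones flagged above. One must ensure the equality case uses only the symmetric Rayleigh principle, not Perron–Frobenius for $G^{\prime}$, because $G^{\prime}$ could be disconnected. One must also use $\alpha<1$ and $N_2\neq\emptyset$, which together give the strict positivity in the last step.
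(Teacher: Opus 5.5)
Your proof is correct. The paper does not prove this lemma itself; it imports it from Xue, Lin, Liu and Shu, and your argument (Rayleigh-quotient comparison at the Perron vector of $G$, then the eigen-equation at $u$ to exclude equality) is the standard proof of that result. You also rightly note that $G^{\prime}$ may be disconnected, so the equality case should rest only on the variational characterization of the largest eigenvalue of a symmetric matrix, not on Perron--Frobenius for $G^{\prime}$.
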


\begin{lemma}\textup{\cite[Lemma $1.1$]{A_alpha_spec_rad_trees_unicyclic_degree_seq}}\label{lem4}
    Consider an edge $uv$ located on an internal path in a connected graph $G$. In $G$, we subdivide $uv$ into two edges $uw$ and $wv$; the new graph is represented by $G_{uv}$. If $\alpha \in [0,1)$, then $\rho_{\alpha}(G_{uv}) < \rho_{\alpha}(G)$.
\end{lemma}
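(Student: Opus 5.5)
We will prove Lemma \ref{lem4} by a Rayleigh-quotient comparison. We use the Perron vector of $G_{uv}$ to build a positive test vector on $G$, and show that its $A_\alpha(G)$-Rayleigh quotient is at least $\rho_\alpha(G_{uv})$, with equality impossible. Write $G^{\prime}=G_{uv}$, $\rho^{\prime}=\rho_\alpha(G^{\prime})$, and let $Y$ be its Perron vector. We use the edge form
\[
X^{T}A_\alpha(H)X=\sum_{ab\in E(H)}\bigl[\alpha(x_a^2+x_b^2)+2(1-\alpha)x_ax_b\bigr],
\]
so that local changes to $H$ change the quadratic form edge by edge.

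\textbf{Step 1: $\rho^{\prime}>2$.} Every internal path lies on a cycle of $G^{\prime}$: in case $(ii)$ it is itself a cycle, and in case $(i)$ it has both ends of degree at least $3$ in a connected graph. Since $G^{\prime}$ also has a vertex of degree $\ge 3$, some cycle $C$ of $G^{\prime}$ is a proper subgraph of $G^{\prime}$. We have $\rho_\alpha(C)=2$, so Lemma \ref{lem1} gives $\rho^{\prime}>2$.

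\textbf{Step 2: the recurrence.} Label the internal path of $G^{\prime}$ as $w_0w_1\cdots w_{k+1}$, where $w_0,w_{k+1}$ have degree $\ge 3$ (with $w_0=w_{k+1}$ in case $(ii)$), $k\ge 1$, and $w_1,\dots,w_k$ have degree $2$. Put $y_i=y_{w_i}$. The eigen-equations at the degree-$2$ vertices read
\[
(\rho^{\prime}-2\alpha)y_i=(1-\alpha)(y_{i-1}+y_{i+1}),\qquad 1\le i\le k.
\]
By Step 1, $(\rho^{\prime}-2\alpha)/(1-\alpha)>2$. Hence $y_i=ar^i+br^{-i}$ with $r>1$ and $ar^i,br^{-i}$ positive or suitably signed. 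In particular, $(y_i)$ is strictly convex along the path, and $y_{i-1}+y_{i+1}>2y_i$ at every interior vertex.

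\textbf{Step 3: the test vector.} Form $G$ back from $G^{\prime}$ by suppressing a degree-$2$ vertex $w_j$, that is, by deleting $w_j$ and joining $w_{j-1}w_{j+1}$. Choose $j$ to be an index where $y_j$ is minimal on $w_1,\dots,w_k$; convexity puts it at a valley of the sequence. Let $X$ be $Y$ restricted to $V(G)=V(G^{\prime})\setminus\{w_j\}$. Then
\[
X^{T}X=Y^{T}Y-y_j^2,
\]
and
\[
X^{T}A_\alpha(G)X=Y^{T}A_\alpha(G^{\prime})Y-2\alpha y_j^2-4(1-\alpha)y_j\frac{y_{j-1}+y_{j+1}}{2}\cdot 1+\alpha\cdot 0+2(1-\alpha)y_{j-1}y_{j+1}.
\]
Using $Y^{T}A_\alpha(G^{\prime})Y=\rho^{\prime}Y^{T}Y$ and the eigen-equation at $w_j$, which gives $2(1-\alpha)y_j(y_{j-1}+y_{j+1})=2(\rho^{\prime}-2\alpha)y_j^2$, the required inequality $X^{T}A_\alpha(G)X\ge\rho^{\prime}X^{T}X$ reduces to
\[
(1-\alpha)\,y_{j-1}y_{j+1}\ \ge\ (\rho^{\prime}-2\alpha)\,y_j^2-\tfrac{1}{2}\rho^{\prime}y_j^2+\alpha y_j^2,
\]
or an equivalent one-variable inequality once the bookkeeping is done carefully.

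\textbf{Step 4: verifying the inequality.} This is the main obstacle, and I will do the computation exactly. Set $y_{j\pm1}=y_j(1+t_\pm)$ with $t_\pm\ge 0$, which holds by minimality of $y_j$. The eigen-equation fixes $t_-+t_+=s$, where $s:=(\rho^{\prime}-2\alpha)/(1-\alpha)-2>0$. The product $y_{j-1}y_{j+1}$ is then at least $y_j^2(1+s)$, and plugging this in should give the inequality with room to spare, using $\rho^{\prime}>2$.

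If the choice of a single minimum vertex does not close the computation, I will fall back on the classical Hoffman--Smith device. For this we pick $j$ with $y_{j-1}\ge y_j\le y_{j+1}$, and instead of deleting $w_j$ we shift the values so that $X$ uses the two values $y_{j-1},y_{j+1}$ on the new edge. Either way we get $\rho_\alpha(G)\ge X^{T}A_\alpha(G)X/X^{T}X\ge\rho^{\prime}$. Equality would force $X$ to be the Perron vector of $G$, which contradicts the eigen-equation at $w_{j-1}$ because $y_j\ne y_{j+1}$ or the strict convexity. Hence $\rho_\alpha(G_{uv})<\rho_\alpha(G)$.
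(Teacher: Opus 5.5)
The paper gives no proof of this lemma; it is quoted from Li et al.\ and is the $A_\alpha$-version of the Hoffman--Smith theorem. Your argument therefore has to stand on its own, and it has a genuine gap.

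Your bookkeeping in Step 3 is correct. The test vector works if and only if $2(1-\alpha)y_{j-1}y_{j+1}\ge(\rho'-2\alpha)y_j^2$. When $y_{j-1}\ge y_j\le y_{j+1}$ and $c=(\rho'-2\alpha)/(1-\alpha)>2$, Step 4 gives this strictly. That settles case (ii), where $y_0=y_{k+1}$ forces a symmetric sequence. It also settles case (i) whenever $y_0\ge y_1$ and $y_{k+1}\ge y_k$.

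\textbf{The gap.} You claim that the minimum over $w_1,\dots,w_k$ is a valley, but strict convexity only makes $y_0,\dots,y_{k+1}$ unimodal. When $w_0\ne w_{k+1}$ the sequence can be strictly monotone. Then the minimum sits at $j=1$ (or $j=k$), and ``$t_-\ge 0$ by minimality'' is false, because $y_{j-1}=y_0$ is an endpoint value not covered by the minimum.

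This really happens, and the inequality really fails. Take $G$ to be the double star in which $u$ has two pendant neighbours, $v$ has $N$, and $uv$ is the internal path. Then $k=1$ and $j=1$ is forced, and your condition becomes $(y_0+y_2)^2\le 2c\,y_0y_2$. For $\alpha=\tfrac12$ and $N=5$ one finds $y_0:y_1:y_2\approx 0.057:0.210:1$ and $c\approx 5.04$. So $2c\,y_0y_2\approx 0.57<1.12\approx(y_0+y_2)^2$, even though $\rho_{1/2}(G)\approx 3.544>\rho_{1/2}(G_{uv})\approx 3.519$. For any fixed $\alpha$ the same failure occurs once $N$ is large.

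\textbf{The fallback does not repair this.} It again presupposes an index with $y_{j-1}\ge y_j\le y_{j+1}$, which does not exist here. Moreover, for $k=1$ the only vector on $G$ agreeing with $Y$ off $w_1$ is the restriction you already tested.

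In the monotone case you must change entries away from the path. For instance, suppose the path is a cut path. Multiply $Y$ on the whole component containing $w_0$ by $y_1/y_0$ and shift the path values by one. This gives $A_\alpha(G)X\ge\rho'X$, with strict inequality at $w_0$ as soon as $y_0y_2>y_1^2$. When the path lies on a cycle even this device is unavailable, and a genuinely global argument of Hoffman--Smith type is needed.

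\textbf{Step 1 is also wrong as argued.} An internal path need not lie on a cycle; it fails for trees, or for a chain of bridges joining two blocks. If $G$ contains a cycle, $\rho'>2$ still follows from a cycle being a proper subgraph. For trees with $\alpha>0$ it follows from a Smith-graph subgraph: entries $1$ on its four leaves and $2$ on its spine give Rayleigh quotient $2+\alpha/(m+1)$, where $m$ is the number of spine vertices.

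At $\alpha=0$, however, the statement itself is false. $\tilde D_n$ and its subdivision $\tilde D_{n+1}$ both have adjacency spectral radius $2$. So any correct proof must use $\alpha>0$ or exclude $\tilde D_n$.
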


\begin{lemma}\textup{\cite[Theorem $3.1$]{A_alpha_spec_rad_by_guo_zhou}}\label{lem5}
    Suppose $G$ is a connected graph having at least one edge, and $r, s$ be two non negative integers. $G(r,s)$ represents the graph we get by attaching two pendant paths of length $r$ and $s$ to any one particular vertex of $G$. If $r \geq s \geq 1$ and $\alpha \in [0,1)$, then it follows that $\rho_{\alpha}(G(r,s)) > \rho_{\alpha}(G(r+1,s-1))$.
\end{lemma}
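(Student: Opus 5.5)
The plan is to obtain $G(r,s)$ from $G(r+1,s-1)$ by one edge-rotation of the kind in Lemma~\ref{lem3}, and to verify the needed inequality between two Perron components using the linear recurrence along pendant paths.

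\textbf{Setup.} Write $G'=G(r+1,s-1)$ and let $v$ be the attachment vertex. Let the two pendant paths be $v=a_0,a_1,\dots,a_{r+1}$ and $v=b_0,b_1,\dots,b_{s-1}$ (when $s=1$ we have $b_{s-1}=v$). Let $Y$ be the $A_\alpha$-Perron vector of $G'$.

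\textbf{The rotation.} Delete the edge $a_sa_{s+1}$ and join $a_{s+1}$ to $b_{s-1}$. The path at $v$ through the $a_i$ now has length $s$. The other path is $b_0,\dots,b_{s-1},a_{s+1},\dots,a_{r+1}$, of length $(s-1)+(r+1-s)+1=r$. So the resulting graph is exactly $G(r,s)$. This is the move of Lemma~\ref{lem3} with $u=b_{s-1}$, the old endpoint $a_s$ in the role of $v$, and $N_2=\{a_{s+1}\}$. The vertex $a_{s+1}$ exists because $r\ge s$, and it is not adjacent to $b_{s-1}$. Hence Lemma~\ref{lem3} gives $\rho_\alpha(G(r,s))>\rho_\alpha(G')$, provided
\[
y_{b_{s-1}}\ \ge\ y_{a_s}. \tag{$\ast$}
\]

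\textbf{Proving $(\ast)$.} Put $\rho=\rho_\alpha(G')$ and $t=(\rho-2\alpha)/(1-\alpha)$. On a pendant path the eigen-equations read $(\rho-2\alpha)y_i=(1-\alpha)(y_{i-1}+y_{i+1})$ at inner vertices and $(\rho-\alpha)y_{\rm leaf}=(1-\alpha)y_{\rm prev}$ at the leaf. Index each path from its leaf. Then both paths follow one universal sequence $f_0=1$, $f_1=(\rho-\alpha)/(1-\alpha)$, $f_{j+1}=tf_j-f_{j-1}$, up to positive scalars:
\[
y_{a_{r+1-j}}=c_af_j,\qquad y_{b_{s-1-j}}=c_bf_j .
\]
Both paths meet at $v$, which gives $c_af_{r+1}=c_bf_{s-1}$. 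Substituting, $(\ast)$ becomes
\[
f_0f_{r+1}\ \ge\ f_{s-1}f_{r+1-s}.
\]
This has the form $f_0f_{p+q+1}\ge f_pf_q$ with $p=s-1$ and $q=r+1-s$.

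To prove it, first note that all $f_j>0$, since they are proportional to positive Perron entries. A Chebyshev-type expansion, proved by induction on $q$, gives
\[
f_{p+q+1}=f_{p+1}U_q-f_pU_{q-1},
\]
where $U_q$ satisfies the same recurrence with $U_{-1}=0$ and $U_0=1$. Writing $f_j=U_j+(f_1-t)U_{j-1}$ with $f_1-t=(\alpha-\rho)/(1-\alpha)+1$, the target reduces to a Cassini-type identity $f_{p+1}U_q-f_pU_{q-1}-f_pf_q=\text{(nonnegative term)}$. I expect the sign of this term to be controlled by $\rho>1$, which holds because $G'$ contains $P_3$, together with the facts $\rho-\alpha>1-\alpha$ and positivity of the $f_j$.

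\textbf{Main obstacle.} The main obstacle is exactly this inequality among the $f_j$ for general $\alpha\in[0,1)$. Monotonicity along pendant paths is not available in general; it needs $\rho\ge2$, which can fail, for example for stars when $\alpha=0$. So the argument must rest on the exact identity rather than on monotonicity.

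\textbf{Fallback.} If the identity resists, I would split into cases. If $(\ast)$ fails, then $y_{a_s}>y_{b_{s-1}}$. In that case I would apply Lemma~\ref{lem3} in the reverse direction to a different rotation: move the tail $b_{s-1}$ onto $a_{r+1}$ or onto $a_s$, and show that this leads back to $G(r,s)$ or contradicts the maximality of $\rho$ for the current configuration.
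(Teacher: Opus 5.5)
\textbf{There is a genuine gap: inequality $(\ast)$ is false in general, so this rotation cannot be justified by Lemma~\ref{lem3}.}

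Your rotation is legitimate: deleting $a_sa_{s+1}$ and joining $a_{s+1}$ to $b_{s-1}$ does produce $G(r,s)$. Your reduction of $(\ast)$ to $f_{r+1}\ge f_{s-1}f_{r+1-s}$ through the leaf-indexed sequence $f_j$ is also correct. The trouble is that this inequality fails, so no Cassini-type identity with a nonnegative remainder can exist.

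\emph{The case $s=1$.} Here the target is exactly the monotonicity $f_{r+1}\ge f_r$, i.e.\ $y_v\ge y_{a_1}$, which you yourself say is unavailable. Take $\alpha=0$, $G=K_2$ on $\{v,w\}$ and $r=2$. Then $G(3,0)=P_5$, with Perron vector proportional to $(1,\sqrt3,2,\sqrt3,1)$ on $w,v,a_1,a_2,a_3$ and $\rho=\sqrt3>1$. So $y_{b_0}=y_v=\sqrt3<2=y_{a_1}$; in your notation $f_3=\sqrt3<2=f_2$.

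\emph{The case $s\ge 2$.} It fails here too. With $\alpha=0$, $G=K_2$ and $r=s=2$, the graph $G(3,1)$ is the tree $D_6$, with $\rho=2\cos(\pi/10)\approx1.902$. Normalizing $y_{a_3}=1$ gives $y_{a_2}\approx1.902>1.618\approx y_{b_1}$.

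\emph{The fallback.} It does not rescue the argument. Reattaching the leaf $b_{s-1}$ at $a_{r+1}$ gives $G(r+2,s-2)$. Reattaching it at $a_s$ creates a new branch vertex. Neither graph is $G(r,s)$. Moreover, the statement compares two fixed graphs, so there is no maximality to contradict. (The paper itself gives no proof here; it only cites Guo and Zhou.)

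\textbf{A repair with the same tools.} Split into cases.
\begin{itemize}
\item If $y_{a_1}\ge y_v$, apply Lemma~\ref{lem3} with $u=a_1$ and $N_2=N_G(v)$, which is nonempty because $G$ is connected with an edge. The new graph is $G(r,s)$ with $a_1$ in the role of $v$; its pendant paths are $a_1a_2\cdots a_{r+1}$ and $a_1vb_1\cdots b_{s-1}$.
\item If $y_v>y_{a_1}$, set $D_j=y_{b_j}-y_{a_{j+1}}$ for $0\le j\le s-1$, with $b_0=v$. Then $D_0>0$.
\begin{itemize}
\item If $D_{s-1}\ge 0$, your rotation works.
\item Otherwise choose $i\le s-2$ with $D_i\ge0>D_{i+1}$. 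Replace the edges $b_ib_{i+1}$ and $a_{i+1}a_{i+2}$ by $a_{i+1}b_{i+1}$ and $b_ia_{i+2}$. This yields $G(r,s)$, with pendant paths $va_1\cdots a_{i+1}b_{i+1}\cdots b_{s-1}$ and $vb_1\cdots b_ia_{i+2}\cdots a_{r+1}$, and it keeps every degree. The Rayleigh quotient of $y$ therefore changes by $-2(1-\alpha)D_iD_{i+1}\ge0$. Equality would make $y$ a $\rho$-eigenvector of the new graph. Comparing the eigen-equations at $b_i$ would then force $D_{i+1}=0$, a contradiction, so the inequality is strict.
\end{itemize}
\end{itemize}
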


\begin{lemma}\label{lem6}
     Let $\alpha \in [0,1)$. Consider an edge $uv$ located on a pendant path in a connected graph $G$. In $G$, we subdivide $uv$ into two edges $uw$ and $wv$; the new graph is represented by $G_{uv}$. Then $\rho_{\alpha}(G_{uv}) > \rho_{\alpha}(G)$.
\end{lemma}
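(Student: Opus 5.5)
The statement to prove is Lemma \ref{lem6}: subdividing an edge on a pendant path increases $\rho_{\alpha}$. My plan is to exhibit $G$ as isomorphic to a proper subgraph of $G_{uv}$ and then apply Lemma \ref{lem1}.

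First I set up notation. Write the pendant path as $v_0 v_1 \cdots v_t$, where $d(v_0)\geq 2$, $d(v_t)=1$, and $d(v_i)=2$ for $0<i<t$. The edge $uv$ equals $v_{i}v_{i+1}$ for some $0\le i\le t-1$.

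Next I describe $G_{uv}$. Subdividing $v_iv_{i+1}$ with a new vertex $w$ replaces the pendant path by the path $v_0 v_1\cdots v_i\, w\, v_{i+1}\cdots v_t$ of length $t+1$. The rest of $G$ is unchanged, and this longer path is again a pendant path at $v_0$. So $G_{uv}$ is isomorphic to the graph obtained from $G$ by attaching one new pendant vertex $v_{t+1}$ to $v_t$. Concretely, the isomorphism relabels $w,v_{i+1},\dots,v_t$ as $v_{i+1},\dots,v_{t+1}$ and fixes all other vertices.

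The key step is the subgraph containment. Under this isomorphism, $G$ is the subgraph of $G_{uv}$ obtained by deleting the leaf $v_{t+1}$ together with its single edge. Hence $G$ is isomorphic to a proper subgraph of the connected graph $G_{uv}$. Since $\rho_{\alpha}$ is invariant under isomorphism, Lemma \ref{lem1} gives $\rho_{\alpha}(G)<\rho_{\alpha}(G_{uv})$ for every $\alpha\in[0,1)$.

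The only point needing care is confirming that $G_{uv}\cong G+v_tv_{t+1}$. This holds because every interior vertex of the pendant path has degree $2$ and no edges leave the path except at $v_0$. Therefore where the extra vertex is inserted along the path does not matter, and inserting it at the leaf end is the same as appending a new leaf. This contrasts with Lemma \ref{lem4}, where the internal path has both ends attached to the rest of the graph and no such relabeling is available. I expect no genuine obstacle beyond writing the relabeling carefully.
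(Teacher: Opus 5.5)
Your proof is correct and follows the same route as the paper: realize $G$ as a proper subgraph of the connected graph $G_{uv}$ and invoke Lemma~\ref{lem1}. The paper simply asserts this containment as clear. You correctly note that it holds only up to isomorphism, via the relabeling $G_{uv}\cong G+v_tv_{t+1}$, and you spell that out, which tightens the argument without changing the approach.
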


\begin{proof}
    Clearly, $G$ is a proper subgraph of the connected graph $G_{uv}$. Applying Lemma \ref{lem1} on the graphs, we directly get the required result.
\end{proof}

\begin{lemma}\textup{\cite[Proposition $20$]{A_Q-merging_by_Nikiforov}}\label{lem7}
    Suppose $G$ is a graph which does not contain any isolated vertices. For any vertex $v$ in $G$, let us define $q(v) = \frac{1}{d(v)}\sum_{uv \in E(G)}d(u)$. If $\alpha \in [0,1)$, then
    \begin{align*}
        \rho_{\alpha}(G) \leq \max_{v \in V(G)}\{\alpha d(v) + (1-\alpha)q(v)\}.
    \end{align*}
    If $G$ is connected and $\alpha \in (\frac{1}{2},1)$, the equality holds if and only if $G$ is regular.
\end{lemma}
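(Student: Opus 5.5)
We prove the bound by a diagonal similarity that turns the rows of $A_{\alpha}(G)$ into the quantities $\alpha d(v)+(1-\alpha)q(v)$, and then bound the spectral radius of a nonnegative matrix by its maximum row sum. Since $G$ has no isolated vertices, the diagonal matrix $D=D(G)$ is invertible. Put $M=D^{-1}A_{\alpha}(G)D$; it is similar to $A_{\alpha}(G)$, so $\rho(M)=\rho_{\alpha}(G)$.

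The entries of $M$ are $M_{vv}=\alpha d(v)$ and $M_{vu}=(1-\alpha)d(u)/d(v)$ for $u\sim v$, with all other entries zero. Hence $M$ is nonnegative, and its $v$-th row sum is
\begin{equation*}
\alpha d(v)+\frac{1-\alpha}{d(v)}\sum_{uv\in E(G)}d(u)=\alpha d(v)+(1-\alpha)q(v).
\end{equation*}
For a nonnegative matrix the spectral radius is at most the maximum row sum (Perron--Frobenius, or simply the $\infty$-norm bound). This gives $\rho_{\alpha}(G)\le \max_{v}\{\alpha d(v)+(1-\alpha)q(v)\}$.

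Next, the equality case for connected $G$ and $\alpha\in(\frac12,1)$. If $G$ is $r$-regular, every row sum equals $\alpha r+(1-\alpha)r=r$, and $\rho_{\alpha}(G)=r$ because the all-ones vector is a positive eigenvector with eigenvalue $r$. So regular graphs attain equality.

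Conversely, suppose equality holds. Since $G$ is connected and $\alpha<1$, $M$ is irreducible. For an irreducible nonnegative matrix, the spectral radius equals the maximum row sum only when all row sums are equal. This follows from the positive left Perron vector $y$: we have $\rho\, y^{T}\mathbf 1=y^{T}M\mathbf 1\le(\max\text{ row sum})\,y^{T}\mathbf 1$, with equality iff every row sum is maximal. Therefore $\alpha d(v)+(1-\alpha)q(v)=c$ for all $v$, for some constant $c$.

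\textbf{Main step.} It remains to show that a constant value of $\alpha d(v)+(1-\alpha)q(v)$ forces regularity when $\alpha>\frac12$. Let $\Delta$ and $\delta$ be the maximum and minimum degrees, attained at vertices $w$ and $u$. Every $q(\cdot)$ is an average of degrees, so $\delta\le q(\cdot)\le\Delta$. Evaluating the constant at $w$ and at $u$ gives
\begin{equation*}
\alpha\Delta+(1-\alpha)\delta\le c\le \alpha\delta+(1-\alpha)\Delta .
\end{equation*}
Hence $(2\alpha-1)(\Delta-\delta)\le 0$. Since $2\alpha-1>0$, we get $\Delta=\delta$, so $G$ is regular. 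This uses $\alpha>\frac12$ exactly here, which explains why the equality characterization is stated only for $\alpha\in(\frac12,1)$.
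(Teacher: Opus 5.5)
Your proof is correct and complete: the similarity $D^{-1}A_{\alpha}(G)D$ gives the rows the sums $\alpha d(v)+(1-\alpha)q(v)$, and irreducibility together with the positive left Perron vector forces all row sums to be equal in the equality case. Evaluating that common value at a vertex of maximum degree and at one of minimum degree then gives $(2\alpha-1)(\Delta-\delta)\le 0$.

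The paper does not prove this lemma itself; it quotes it from Nikiforov (Proposition 20). Your similarity-plus-row-sum argument is the standard route to that bound, so there is nothing to compare or repair.
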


\begin{lemma}\textup{\cite[Theorem $3.2$]{bounds_spec_rad_A_alpha_by_alhevaz}}\label{lem8}
    Let $\alpha \in [\frac{1}{2}, 1]$. If $G$ is a connected graph of order $n$ with maximum degree $\Delta$, and $\lambda(L_S(G))$ is the spectral radius of the matrix $L_S(G)$, then
    \begin{align*}
        \rho_{\alpha}(G) \leq (2\alpha -1)\Delta + (1-\alpha)\lambda(L_S(G)).
    \end{align*}
    Equality occurs if and only if $G$ is a non-regular graph and $\alpha = \frac{1}{2}$ or $1$; or $G$ is regular.
\end{lemma}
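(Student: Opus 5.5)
The plan is to split $A_{\alpha}(G)$ into a diagonal part, which can be bounded by $\Delta$, and a multiple of $L_S(G)$, and then compare Rayleigh quotients. Since $D(G)+A(G)=L_S(G)$, we have
\begin{equation*}
A_{\alpha}(G)=\alpha D(G)+(1-\alpha)A(G)=(2\alpha-1)D(G)+(1-\alpha)L_S(G).
\end{equation*}
For $\alpha\in[\frac{1}{2},1]$ both coefficients $2\alpha-1$ and $1-\alpha$ are nonnegative. This sign condition is exactly what makes the splitting useful.

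First I will take the unit Perron vector $X$ of $A_{\alpha}(G)$. For $\alpha<1$ it is positive, by the irreducibility noted in the introduction. The Rayleigh quotient then gives
\begin{equation*}
\rho_{\alpha}(G)=X^{T}A_{\alpha}(G)X=(2\alpha-1)\sum_{v}d(v)x_v^2+(1-\alpha)X^{T}L_S(G)X .
\end{equation*}
Next I bound the two terms separately. The first satisfies $\sum_v d(v)x_v^2\le \Delta\sum_v x_v^2=\Delta$. The second satisfies $X^{T}L_S(G)X\le\lambda(L_S(G))$, by the Rayleigh--Ritz characterization of the largest eigenvalue of the symmetric matrix $L_S(G)$. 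Multiplying by the nonnegative coefficients and adding yields the inequality. The case $\alpha=1$ I treat directly: $A_1(G)=D(G)$, so $\rho_1(G)=\Delta$, and the right-hand side is also $\Delta$. Hence equality always holds at $\alpha=1$.

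For the equality characterization I consider three situations.
\begin{itemize}
\item At $\alpha=\frac{1}{2}$ the first term vanishes. Then $A_{1/2}(G)=\frac{1}{2}L_S(G)$, so equality is trivial.
\item At $\alpha=1$ equality was just shown.
\item For $\alpha\in(\frac{1}{2},1)$ equality forces $\sum_v d(v)x_v^2=\Delta$, i.e.\ $\sum_v(\Delta-d(v))x_v^2=0$. Since every $x_v>0$, this gives $d(v)=\Delta$ for all $v$, so $G$ is regular.
\end{itemize}
Conversely, if $G$ is $r$-regular, then $\rho_{\alpha}(G)=r$ and $\lambda(L_S(G))=2r$, both with the all-ones eigenvector. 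The right-hand side is then $(2\alpha-1)r+2(1-\alpha)r=r$, so equality holds.

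The only delicate point is the equality case. There the positivity of the Perron vector must be invoked, which requires connectivity and $\alpha<1$. The endpoint $\alpha=1$ needs separate handling because $D(G)$ is reducible, so its Perron vector need not be positive.
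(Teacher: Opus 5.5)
Your proof is correct and complete. The splitting $A_{\alpha}(G)=(2\alpha-1)D(G)+(1-\alpha)L_S(G)$ has nonnegative coefficients, and you bound each term by a Rayleigh quotient; this is fine because $\lambda(L_S(G))$ is also the largest eigenvalue, $L_S(G)$ being positive semidefinite. For the equality case, positivity of the Perron vector forces $d(v)=\Delta$ when $\alpha\in(\frac12,1)$, and together these steps give exactly the stated inequality and its equality cases.

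The paper does not prove this lemma; it only cites it from Alhevaz et al., so there is no proof in the paper to compare yours against. Your argument is essentially Weyl's inequality for the top eigenvalue, which is the natural way to obtain a bound of this form.
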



\section{Proof of Theorem \ref{r6}}

In this section, we establish a series of auxiliary lemmas that will be used to prove our main result, Theorem \ref{r6}.

It is known from \cite{spec_rad_of_tricyclic_by_geng_li} that the total number of cycles a tricyclic graph may contain can be $3$, $4$, $6$, or $7$. Thus, every graph in $\mathscr{T}_n^k$ is derived from some graph in Figure \ref{f1} by connecting trees to some of their vertices. For each $i = 3, 4, 6, 7$, let us denote the collection of tricyclic graphs in $\mathscr{T}_n^k$ that have precisely $i$ number of cycles by $\mathscr{T}_n^{k,i}$. Then $\mathscr{T}_n^k = \bigcup_{i\in \{3,4,6,7\}}\mathscr{T}_n^{k,i}$. 

\begin{figure}[htbp]
    \centering
    \includegraphics[scale=.7]{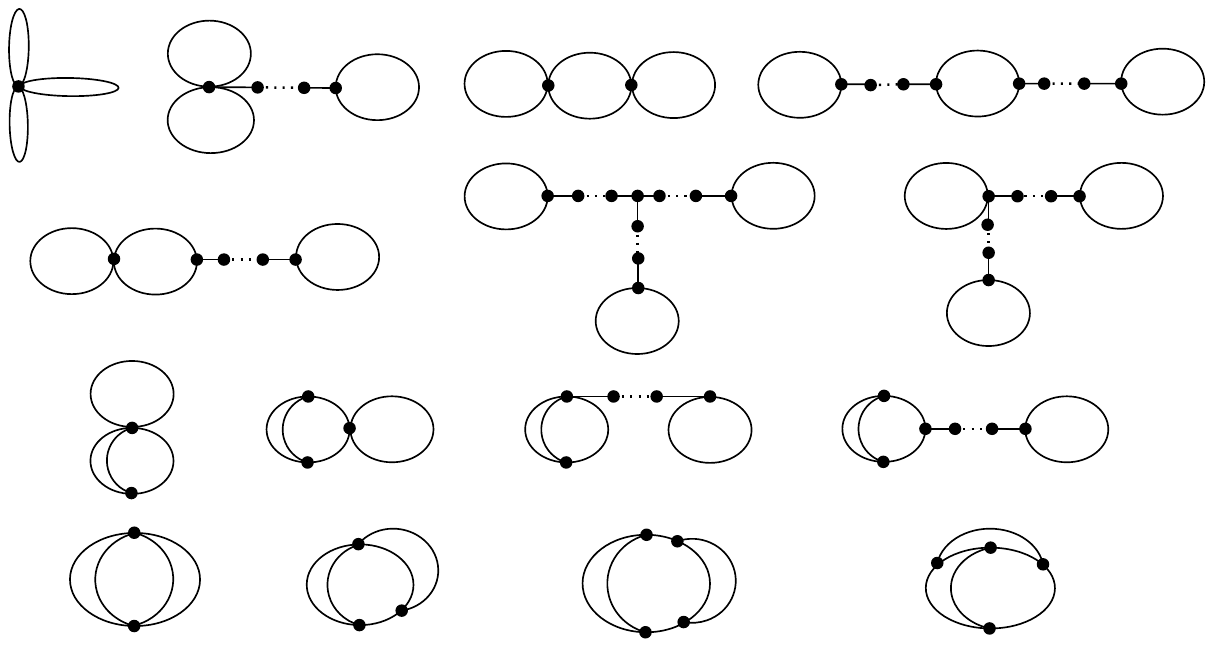}
    \caption{All possible configurations of cycles in graphs of class $\mathscr{T}_n^k$.}
    \label{f1}
\end{figure}

\begin{figure}[htb]
    \centering
    \includegraphics[scale=.6]{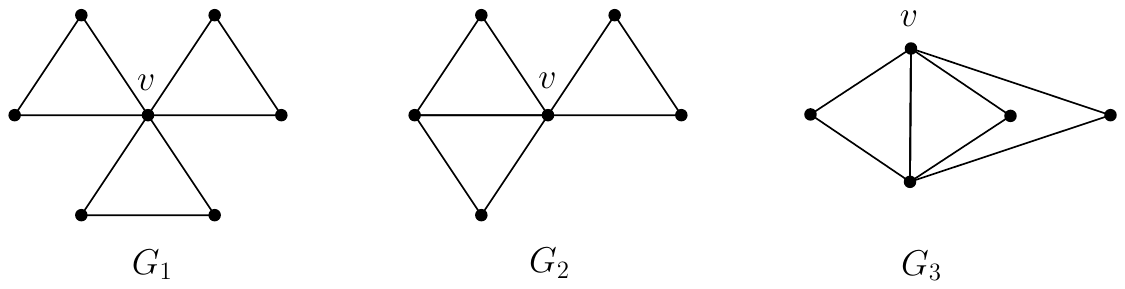}
    \caption{Graphs $G_1$, $G_3$ and $G_3$.}
    \label{f2}
\end{figure}

We recall the definition of the graph $\mathcal{T}_3 \in \mathscr{T}_n^k$ from Section $1$. $\mathcal{T}_3$ is obtained from the graph $G_1$ (see Figure \ref{f2}) after connecting $k$ number of paths having nearly equal lengths to the common vertex $v$ in $G_1$. Similarly, let $\mathcal{T}_4$ and $\mathcal{T}_6$ denote the tricyclic graphs in $\mathscr{T}_n^k$ derived from $G_2$ and $G_3$ (see Figure \ref{f2}), respectively, after connecting $k$ number of paths having nearly equal lengths to the vertex $v$ in each of them. Lastly, let $\mathcal{T}_7$ denote the tricyclic graph in $\mathscr{T}_n^k$ obtained from the graph $K_4$ by connecting $k$ number of paths which have nearly equal lengths, to any vertex $v$ of $K_4$. One can observe that $\mathcal{T}_i \in \mathscr{T}_n^{k,i}$, $i=3, 4, 6, 7$.

In order to prove Theorem \ref{r6}, we now present several lemmas one after another. We begin with the first lemma of this section, which provides a comparison of the $A_{\alpha}$-spectral radii of the four graphs $\mathcal{T}_3$, $\mathcal{T}_4$, $\mathcal{T}_6$ and $\mathcal{T}_7$.

\begin{lemma}\label{r1}
     Let $\alpha \in [\frac{1}{2}, 1)$ and $n,k$ be integers with $1\leq k \leq n-7$. The graph $\mathcal{T}_3$ has strictly largest $A_{\alpha}$-spectral radius among $\mathcal{T}_3$, $\mathcal{T}_4$, $\mathcal{T}_6$ and $\mathcal{T}_7$.
\end{lemma}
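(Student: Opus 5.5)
The plan is to compare spectral radii without computing them. I will bound $\rho_\alpha(\mathcal{T}_3)$ from below using its large maximum degree, and bound $\rho_\alpha(\mathcal{T}_i)$ for $i=4,6,7$ from above using Lemma \ref{lem7}. In $\mathcal{T}_3$ the central vertex $v$ lies on three triangles and carries $k$ pendant paths, so $\Delta(\mathcal{T}_3)=k+6$. Lemma \ref{lem2} with $\alpha\in[\frac12,1)$ then gives
\begin{equation*}
\rho_\alpha(\mathcal{T}_3)\ \ge\ \alpha(k+6)+\frac{(1-\alpha)^2}{\alpha}\ >\ \alpha(k+6).
\end{equation*}
It therefore suffices to show that $\alpha d(w)+(1-\alpha)q(w)\le \alpha(k+6)$ for every vertex $w$ of $\mathcal{T}_4$, $\mathcal{T}_6$ and $\mathcal{T}_7$. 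The strict inequality already obtained for $\mathcal{T}_3$ then yields the strict conclusion, so the equality case of Lemma \ref{lem7} never needs to be analysed.

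For $\mathcal{T}_7$ the check is direct. The vertex $v$ of $K_4$ has degree $k+3$. Its three neighbours in $K_4$ have degree $3$, and its $k$ path neighbours have degree at most $2$. Hence $q(v)\le \frac{9+2k}{k+3}<3$, and
\begin{equation*}
\alpha(k+3)+(1-\alpha)q(v)\ <\ \alpha(k+3)+3(1-\alpha)\ \le\ \alpha(k+3)+3\alpha=\alpha(k+6),
\end{equation*}
using $1-\alpha\le\alpha$. Every other vertex has degree at most $3$, and each of its neighbours has degree at most $k+3$. So $q(w)\le \frac{(k+3)+6}{3}$ for the $K_4$-vertices, and $q(w)\le \frac{k+5}{2}$ for a path vertex adjacent to $v$. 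These give quantities of order $\alpha\cdot 3+(1-\alpha)\frac{k+9}{3}$, which is at most $\alpha(k+6)$ because $(1-\alpha)\le\alpha$ and $\frac{k+9}{3}\le k+3$.

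The same template applies to $\mathcal{T}_4$ and $\mathcal{T}_6$, built from $G_2$ and $G_3$. Reading off Figure \ref{f2}, the vertex $v$ has core degree at most $5$, so $d(v)\le k+5$, and the other core vertices have degree at most $4$. At $v$, I will write $q(v)=\frac{S+2k}{k+c}$, where $c$ is the core degree of $v$ and $S$ is the sum of the core degrees of its core neighbours. I then need $(1-\alpha)q(v)\le\alpha(6-c)$. Since $1-\alpha\le\alpha$, it is enough that $q(v)\le 6-c$, that is, $S+2k\le (6-c)(k+c)$.

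This inequality is the main obstacle. When $c=5$, so that the right side is $k+5$, it can fail if the core neighbours of $v$ have large degree, because then $2k$ exceeds $k$. If Lemma \ref{lem7} is too weak in that case, I will fall back on one of two alternatives. The first is Lemma \ref{lem8}, which bounds $\rho_\alpha$ by $(2\alpha-1)\Delta+(1-\alpha)\lambda(L_S)$, together with an estimate of the signless Laplacian spectral radius. The second is a graph transformation via Lemma \ref{lem3}: with $u=v$, which has the largest Perron component, move an edge of the core of $G_2$ or $G_3$ onto $v$. This turns $\mathcal{T}_4$ or $\mathcal{T}_6$ into a graph with strictly larger $\rho_\alpha$ that contains three triangles at $v$. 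Afterwards I would use Lemmas \ref{lem4}, \ref{lem5} and \ref{lem6} to rebalance the path lengths so that the graph becomes $\mathcal{T}_3$ without decreasing $\rho_\alpha$.

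For the non-central vertices of $\mathcal{T}_4$ and $\mathcal{T}_6$, the bound is routine, as in the $\mathcal{T}_7$ case: their degrees are at most $4$, and each has at most one neighbour of large degree, namely $v$.
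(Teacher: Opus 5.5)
Your $\mathcal{T}_7$ case is correct and is the paper's argument. For $\mathcal{T}_4$ and $\mathcal{T}_6$, however, the reduction to $\alpha d(w)+(1-\alpha)q(w)\le\alpha(k+6)$ does not work, and the trouble is not confined to $c=5$. Dropping the slack $\frac{(1-\alpha)^2}{\alpha}$ already breaks $\mathcal{T}_6$. There $G_3=K_{1,1,3}$ and $v$ is a degree-$4$ vertex, so $c=4$ and $S=4+2+2+2=10$. Your criterion $S+2k\le(6-c)(k+c)$ then reads $2k+10\le 2k+8$, which is false. Concretely, with all pendant paths of length at least $2$ and $\alpha=\frac12$, you get $\frac{k+4}{2}+\frac{k+5}{k+4}>\frac{k+6}{2}$. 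The paper rescues this case only by keeping the slack, using $\frac{2k+10}{k+4}\le\frac{12}{5}$ and $27\alpha^2-22\alpha+5>0$.

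For $\mathcal{T}_4$, the core is a diamond and a triangle sharing a degree-$3$ vertex $v$ of the diamond. So $c=5$, and the core neighbours of $v$ have degrees $3,2,2,2,2$. Here Lemma~\ref{lem7} fails even with the full slack: at $\alpha=\frac12$ you would need $\frac{2k+11}{2(k+5)}<1$, which never holds.

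So $\mathcal{T}_4$ genuinely needs another tool, and neither fallback supplies it.

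Your first fallback names Lemma~\ref{lem8}, but at $\alpha=\frac12$ that lemma is just $\rho_{1/2}=\frac12\lambda(L_S)$. All the content is therefore in an upper bound for $\lambda(L_S(\mathcal{T}_4))$, which you do not give. The paper imports $\lambda(L_S(\mathcal{T}_4))\le k+7$ from the literature. With $\Delta=k+5$ this gives $\rho_\alpha(\mathcal{T}_4)\le\alpha(k+6)+2-3\alpha$. That bound beats $\alpha(k+6)+\frac{(1-\alpha)^2}{\alpha}$ by exactly $\frac{(2\alpha-1)^2}{\alpha}$, and it beats your reduced target $\alpha(k+6)$ only for $\alpha\ge\frac23$. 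At $\alpha=\frac12$ the margin is zero. The paper must then prove that Lemma~\ref{lem2} is strict for $\mathcal{T}_3$, since equality holds only for stars. So your claim that no equality case needs analysing is false on this route.

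Your second fallback fails outright. In $G_2$ and $G_3$, as in $K_4$, the vertex $v$ is adjacent to every other core vertex: the cores are $v\vee(P_3\cup K_2)$ and $v\vee K_{1,3}$, against $v\vee 3K_2$ for $\mathcal{T}_3$. Hence, with $u=v$, the set $N_1$ in Lemma~\ref{lem3} is empty for every core vertex, and no core edge can be moved onto $v$. Reaching the $7$-vertex core of $\mathcal{T}_3$ would mean pulling pendant-path vertices into the core. That requires Perron-component inequalities you have not established. Moreover, the lemmas you cite for ``rebalancing'' change the order $n$.
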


\begin{proof}
    To establish the lemma, we must prove a few statements first.
    \medskip\\ 
    \noindent \textbf{Claim 1:} For $\alpha \in [\frac{1}{2}, 1)$, $\rho_{\alpha}(\mathcal{T}_3) > \rho_{\alpha}(\mathcal{T}_7)$.
    \begin{proof}[\unskip\nopunct]\let\qed\relax
        \textbf{Proof of Claim 1:} Maximum degree of $\mathcal{T}_3$ is $k+6$. Therefore by Lemma \ref{lem2}, for $\alpha \in [\frac{1}{2}, 1)$ we have, 
        \begin{align}\label{eq2}
            \rho_{\alpha}(\mathcal{T}_3) \geq \alpha (k+6) + \frac{(1-\alpha)^2}{\alpha}.
        \end{align}
        By Lemma \ref{lem7}, 
        {
        \allowdisplaybreaks
        \begin{align}\label{eq3}
            \rho_{\alpha}(\mathcal{T}_7) &\leq \max\bigg\{\alpha (k+3) +(1-\alpha)\frac{2k+9}{k+3}, 3\alpha + (1-\alpha)\frac{k+9}{3}, 2\alpha + (1-\alpha)\frac{k+5}{2},\notag\\
            &\quad\ 2\alpha + (1-\alpha)\frac{k+4}{2}, \alpha + (1-\alpha)\frac{k+3}{1}, 2\alpha + (1-\alpha)\frac{4}{2}, 2\alpha + (1-\alpha)\frac{3}{2}, \notag\\
            &\quad\ \alpha + (1-\alpha)\frac{2}{1} \bigg\}\notag\\
            &= \max\bigg\{ \alpha (k+3) +(1-\alpha)\frac{2k+9}{k+3}, 3\alpha + (1-\alpha)\frac{k+9}{3}, 2\alpha + (1-\alpha)\frac{k+5}{2}, \notag\\
            &\quad\ \alpha + (1-\alpha)\frac{k+3}{1} \bigg\},
        \end{align}
        }
        by discarding the smaller terms.
        Now $\alpha (k+3) +(1-\alpha)\frac{2k+9}{k+3} = \alpha (k+3) +(1-\alpha)\frac{2(k+9/2)}{k+3} < \alpha (k+3) + (1-\alpha)3$, since $\frac{2(k+9/2)}{k+3}$ is a monotonically decreasing function in $k$ $(k\geq 1)$ and ${\frac{2(k+9/2)}{k+3}\big|}_{k=1}=\frac{11}{4}<3$. Therefore when $\alpha \in [\frac{1}{2},1)$,
        \begin{align}\label{eq4}
            \alpha (k+3) +(1-\alpha)\frac{2k+9}{k+3} < \alpha k +3 \leq \alpha (k+6) + \frac{(1-\alpha)^2}{\alpha}.
        \end{align}
        Again for $\alpha \in [\frac{1}{2},1)$, we have
        \begin{align}\label{eq5}
            3\alpha + (1-\alpha)\frac{k+9}{3} &< 3\alpha +(1-\alpha)(k+3) = \alpha (k+6) + (1-2\alpha)(k+3)\notag\\
            &\leq \alpha (k+6) < \alpha (k+6) + \frac{(1-\alpha)^2}{\alpha}.
        \end{align}
        For $\alpha \in [\frac{1}{2}, 1)$, $2\alpha + (1-\alpha)\frac{k+5}{2} < 3\alpha +(1-\alpha)(k+3)$. Therefore
        \begin{align}\label{eq7}
            2\alpha + (1-\alpha)\frac{k+5}{2} < \alpha (k+6) + \frac{(1-\alpha)^2}{\alpha}.
        \end{align}
        Also for $\alpha \in [\frac{1}{2}, 1)$, $\alpha + (1-\alpha)\frac{k+3}{1} < 3\alpha +(1-\alpha)(k+3)$. Therefore
        \begin{align}\label{eq100}
            \alpha + (1-\alpha)\frac{k+3}{1} < \alpha (k+6) + \frac{(1-\alpha)^2}{\alpha}.
        \end{align}
        Using \eqref{eq4}, \eqref{eq5}, \eqref{eq7} and \eqref{eq100} in \eqref{eq3}, we get
        \begin{align}\label{eq6}
            \rho_{\alpha}(\mathcal{T}_7) < \alpha (k+6) + \frac{(1-\alpha)^2}{\alpha}, \qquad \alpha \in \Big[\frac{1}{2},1\Big).
        \end{align}
        \eqref{eq2} and \eqref{eq6} together prove Claim $1$.
    \end{proof}
    \noindent\textbf{Claim 2:} For $\alpha \in [\frac{1}{2}, 1)$, $\rho_{\alpha}(\mathcal{T}_3) > \rho_{\alpha}(\mathcal{T}_6)$.
    \begin{proof}[\unskip\nopunct]\let\qed\relax
        \textbf{Proof of Claim 2:} By Lemma \ref{lem7}, we have
        \begin{align}\label{eq8}
            \rho_{\alpha}(\mathcal{T}_6) &\leq \max\bigg\{\alpha (k+4) +(1-\alpha)\frac{2k+10}{k+4}, 2\alpha + (1-\alpha)\frac{k+8}{2}, 4\alpha + (1-\alpha)\frac{k+10}{4},\notag\\
            &\quad\ 2\alpha + (1-\alpha)\frac{k+6}{2}, 2\alpha + (1-\alpha)\frac{k+5}{2}, \alpha + (1-\alpha)\frac{k+4}{1}, 2\alpha + (1-\alpha)\frac{3}{2},\notag \\
            &\quad\ \alpha + (1-\alpha)\frac{2}{1}, 2\alpha + (1-\alpha)\frac{4}{2} \bigg\}\notag\\
            &= \max\bigg\{\alpha (k+4) +(1-\alpha)\frac{2k+10}{k+4}, 2\alpha + (1-\alpha)\frac{k+8}{2}, \notag\\
            &\quad\ 4\alpha + (1-\alpha)\frac{k+10}{4}, \alpha + (1-\alpha)\frac{k+4}{1} \bigg\}.
        \end{align}
        In a similar way to the proof of Claim 1, we find $\alpha (k+4) +(1-\alpha)\frac{2k+10}{k+4} = \alpha (k+4) +(1-\alpha)\frac{2(k+5)}{k+4} \leq \alpha (k+4) +(1-\alpha)\frac{12}{5} = \alpha (k+6) +\frac{12}{5} -\frac{22\alpha}{5}$. Now $\frac{(1-\alpha)^2}{\alpha}-\big(\frac{12}{5} -\frac{22\alpha}{5}\big)= \frac{1}{\alpha}-\frac{22}{5} + \frac{27\alpha}{5} = \frac{27\alpha^2 -22\alpha +5}{5\alpha}$. Here, in the numerator, the quadratic polynomial $27\alpha^2 -22\alpha +5$ is positive for any $\alpha$, since its leading coefficient is positive and its discriminant is negative. Therefore $\frac{27\alpha^2 -22\alpha +5}{5\alpha} > 0$ when $\alpha \in [\frac{1}{2}, 1)$. Thus we get
        \begin{align}\label{eq9}
            \alpha (k+4) +(1-\alpha)\frac{2k+10}{k+4} < \alpha (k+6) + \frac{(1-\alpha)^2}{\alpha}, \qquad \alpha \in \Big[\frac{1}{2}, 1\Big).
        \end{align}
        Again $2\alpha + (1-\alpha)\frac{k+8}{2} < 2\alpha + (1-\alpha)(k+4)= \alpha (k+6) +(1-2\alpha)(k+4)\leq \alpha (k+6)$, when $\alpha \in [\frac{1}{2}, 1)$. Therefore
        \begin{align}\label{eq10}
            2\alpha + (1-\alpha)\frac{k+8}{2} < \alpha (k+6) + \frac{(1-\alpha)^2}{\alpha}, \qquad \alpha \in \Big[\frac{1}{2}, 1\Big). 
        \end{align}
        For $\alpha \in [\frac{1}{2}, 1)$, $4\alpha + (1-\alpha)\frac{k+10}{4} = \frac{1}{4}\big[16\alpha +(1-\alpha)(k+10)\big] = \frac{1}{4}\big[\alpha (k+6) +10+(1-2\alpha)k\big] \leq \frac{1}{4}\big[\alpha (k+6)+10\big]< \alpha (k+6)$, because $\alpha (k+6) - \frac{1}{4}\big[\alpha (k+6)+10\big] = \alpha (k+6)\times \frac{3}{4} -\frac{5}{2} \geq \frac{1}{2}\times(k+6)\times \frac{3}{4}-\frac{5}{2}=\frac{3k}{8} -\frac{1}{4}>0$ for $\alpha \in [\frac{1}{2}, 1)$ and $k\geq 1$. So
        \begin{align}\label{eq11}
            4\alpha + (1-\alpha)\frac{k+10}{4} < \alpha (k+6) + \frac{(1-\alpha)^2}{\alpha}, \qquad \alpha \in \Big[\frac{1}{2}, 1\Big).
        \end{align}
        And $\alpha + (1-\alpha)\frac{k+4}{1} < 2\alpha + (1-\alpha)(k+4)= \alpha (k+6) +(1-2\alpha)(k+4)\leq \alpha (k+6)$, when $\alpha \in [\frac{1}{2}, 1)$. Therefore
        \begin{align}\label{eq101}
            \alpha + (1-\alpha)\frac{k+4}{1} < \alpha (k+6) + \frac{(1-\alpha)^2}{\alpha}, \qquad \alpha \in \Big[\frac{1}{2}, 1\Big). 
        \end{align}
        Using \eqref{eq9}, \eqref{eq10}, \eqref{eq11} and \eqref{eq101} in \eqref{eq8}, we get
        \begin{align}\label{eq12}
            \rho_{\alpha}(\mathcal{T}_6) < \alpha (k+6) + \frac{(1-\alpha)^2}{\alpha}, \qquad \alpha \in \Big[\frac{1}{2}, 1\Big).
        \end{align}
        Claim $2$ now follows from \eqref{eq2} and \eqref{eq12}.
    \end{proof}
    \noindent\textbf{Claim 3:} For $\alpha \in [\frac{1}{2}, 1)$, $\rho_{\alpha}(\mathcal{T}_3) > \rho_{\alpha}(\mathcal{T}_4)$.
    \begin{proof}[\unskip\nopunct]\let\qed\relax
        \textbf{Proof of Claim 3:} \textit{Case 1: $\alpha =\frac{1}{2}$.}\par
        When $\alpha =\frac{1}{2}$, from Lemma \ref{lem2}, we have $\rho_{\frac{1}{2}}(\mathcal{T}_3) \geq \frac{k+6}{2} + \frac{1}{2}= \frac{k+7}{2}$. Using Proposition 12 from \cite{A_Q-merging_by_Nikiforov}, it can be seen that equality in the above relation is not possible, otherwise $\mathcal{T}_3$ would have been the graph $K_{1,k+6}$, which is a contradiction. Therefore $\rho_{\frac{1}{2}}(\mathcal{T}_3) > \frac{k+7}{2}$. If $\lambda(L_S(\mathcal{T}_4))$ is the spectral radius of $L_S(\mathcal{T}_4)$, as mentioned in the proof of Theorem $3.5$ in \cite{signless_lap_spec_rad_of_tricyclic_tree_pendant}, we have $\lambda (L_S(\mathcal{T}_4)) \leq k+7$. Therefore $\rho_{\frac{1}{2}}(\mathcal{T}_4) = \frac{1}{2}\lambda (L_S(\mathcal{T}_4)) \leq \frac{k+7}{2}$. Hence $\rho_{\alpha} (\mathcal{T}_3) > \rho_{\alpha}(\mathcal{T}_4)$, when $\alpha =\frac{1}{2}$.\par
        \noindent\textit{Case 2: $\alpha \in (\frac{1}{2}, 1)$.}\par
        When $\alpha \in (\frac{1}{2}, 1)$, $\rho_{\alpha}(\mathcal{T}_3) \geq \alpha (k+6) + \frac{(1-\alpha)^2}{\alpha}$ (from Lemma \ref{lem2}). Applying Lemma \ref{lem8} and using $\lambda (L_S(\mathcal{T}_4)) \leq k+7$, we get $\rho_{\alpha}(\mathcal{T}_4) \leq (1-\alpha)(k+7) +(2\alpha -1)(k+5) = \alpha (k+6) + (2-3\alpha) < \alpha (k+6) + \frac{(1-\alpha)^2}{\alpha}$, as $\frac{(1-\alpha)^2}{\alpha} - (2-3\alpha) =\frac{1-2\alpha +\alpha^2 -2\alpha +3\alpha^2}{\alpha} = \frac{(2\alpha -1)^2}{\alpha} >0$ when $\alpha \in (\frac{1}{2}, 1)$. Therefore $\rho_{\alpha} (\mathcal{T}_3) > \rho_{\alpha}(\mathcal{T}_4)$, when $\alpha \in (\frac{1}{2}, 1)$.\par
        \noindent Both the cases, together, make Claim $3$ true.
    \end{proof}
    Now the required result follows from Claims $1, 2$ and $3$.
\end{proof}

\begin{lemma}\label{r8}
    Let $\alpha \in [0,1)$. For each $i=3,4,6$ and $7$, let $G_i^*$ be the graph with largest $A_{\alpha}$-spectral radius in $\mathscr{T}_n^{k,i}$. Then the following statements hold true.
    \begin{enumerate}[label={\upshape (\alph*)}]
        \item There is exactly one vertex, say $v_i$, in $G_i^*$ which is attached to a pendant tree, say $T_i$.
        \item The tree $T_i$ (of \upshape(a)) attached to $v_i$ is consisting of $k$ pendant paths attached to $v_i$ in $G_i^*$. Moreover, these $k$ pendant paths have nearly equal lengths.  
    \end{enumerate}   
\end{lemma}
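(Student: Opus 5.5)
We argue by contradiction throughout, using the graph transformations of Lemmas \ref{lem3}, \ref{lem5} and \ref{lem1}. Each such transformation keeps the number of vertices, edges and pendant vertices, and keeps the cycle structure, so it stays inside $\mathscr{T}_n^{k,i}$ while strictly increasing $\rho_\alpha$. Since $k\ge 1$, $G_i^*$ has a pendant vertex. Hence at least one vertex of the cycle-core (the graph of Figure \ref{f1} underlying $G_i^*$) carries a pendant tree.

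\textbf{Part (a).} Suppose two distinct core vertices $u$ and $v$ carry pendant trees $T_u$ and $T_v$. Let $X$ be the Perron vector of $A_\alpha(G_i^*)$ and assume, without loss of generality, that $x_u\ge x_v$. Let $N_2$ be the set of neighbours of $v$ lying in $T_v$. None of these is adjacent to $u$, because pendant trees at different core vertices are disjoint and not joined by edges. Moving all edges from $v$ to $N_2$ over to $u$, as in Lemma \ref{lem3}, produces a graph $G'$ with $\rho_\alpha(G')>\rho_\alpha(G_i^*)$. This move preserves the core, the number of edges, and the number of pendant vertices: $v$ keeps its core degree, which is at least $2$, and the leaves of $T_v$ remain leaves. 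So $G'\in\mathscr{T}_n^{k,i}$, contradicting the maximality of $G_i^*$. Hence exactly one vertex $v_i$ carries a pendant tree $T_i$.

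\textbf{Part (b), first step.} We show that $T_i$ consists of pendant paths at $v_i$. Suppose some vertex $w\ne v_i$ of $T_i$ has degree at least $3$, and take such a $w$ at maximum distance from $v_i$. Then at least two pendant paths hang from $w$. Let $w'$ be a vertex adjacent to $w$ that lies on one of those pendant paths. We want to apply Lemma \ref{lem3} to move edges so that the two paths merge into a single longer path. This step would reduce the number of pendant vertices, so instead we compare two vertices directly. Let $u$ be the neighbour of $w$ on the path toward $v_i$. If $x_u\ge x_w$, move all the branches at $w$ except one onto $u$. If $x_w>x_u$, move the remaining neighbours of $u$, other than $w$, onto $w$. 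In either case the degree sum is preserved and the leaves are unchanged, so the number of pendant vertices stays $k$. We must check the non-adjacency condition of Lemma \ref{lem3}; it holds because $T_i$ is a tree, so the relevant vertex sets have no common neighbours. Repeating this, we push all branching down to $v_i$. Each step strictly increases $\rho_\alpha$, which contradicts maximality. Therefore every vertex of $T_i$ other than $v_i$ has degree at most $2$, and $T_i$ is a union of exactly $k$ pendant paths at $v_i$.

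\textbf{Part (b), second step.} If two of these paths have lengths $r\ge s+2$, apply Lemma \ref{lem5} to the graph obtained by deleting the two paths, with the pair $(s+1,r-1)$ in the role of $(r,s)$. This gives $\rho_\alpha(G(r-1,s+1))>\rho_\alpha(G(r,s))$, and the new graph is still in $\mathscr{T}_n^{k,i}$, contradicting maximality. Hence the paths have nearly equal lengths.

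The main obstacle is the first step of part (b). There one must pick the pair of vertices in Lemma \ref{lem3} carefully, so that in both outcomes of the Perron-component comparison the move strictly reduces branching away from $v_i$ without changing the number of pendant vertices or creating new cycles. A cleaner alternative would be to first take $w$ adjacent to $v_i$ and compare it directly with $v_i$. This needs care, however: if $x_w>x_{v_i}$, then moving the core edges of $v_i$ onto $w$ yields an isomorphic configuration with the branching vertex relabelled, and one must argue that $\rho_\alpha$ still strictly increases.
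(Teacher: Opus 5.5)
Part (a), your case $x_u\ge x_w$ in (b), and the Lemma \ref{lem5} step are fine and agree with the paper. In the last step you want $(r-1,s+1)$ in the role of $(r,s)$; the order is immaterial since $G(a,b)\cong G(b,a)$.

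The gap is the case $x_w>x_u$ in the first step of (b). Transferring \emph{all} neighbours of $u$ other than $w$ onto $w$ leaves $u$ with $w$ as its only neighbour, so $u$ becomes a new pendant vertex. Before the move $u$ had degree at least $2$, being $v_i$ or an interior vertex of $T_i$. So the new graph has $k+1$ pendant vertices, lies outside $\mathscr{T}_n^{k,i}$, and its larger $\rho_\alpha$ contradicts nothing. Your sentence ``the leaves are unchanged'' is exactly where the argument breaks.

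Moving fewer edges does not repair this in general. Suppose $u\neq v_i$ and $d(u)=2$, which happens whenever $w$ is adjacent neither to $v_i$ nor to another vertex of degree at least $3$. Then moving even the single other neighbour of $u$ onto $w$ makes $u$ a leaf. So no application of Lemma \ref{lem3} to the pair $u,w$ stays in the class.

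What is missing is a tool for this degree-$2$ situation. The paper takes $w$ to be a branching vertex of $T_i-v_i$ \emph{nearest} to $v_i$.
\begin{itemize}
\item If $d(v_i,w)=1$, it compares $x_{v_i}$ with $x_w$ and moves a \emph{single} edge: either a child of $w$ onto $v_i$, or one core edge $yv_i$ onto $yw$. The latter keeps $d(v_i)\ge 2$ and merely reroutes one cycle through $w$, so the graph stays in the class.
\item If $d(v_i,w)\ge 2$, the $v_i$--$w$ path is an internal path of length at least $2$. Contracting one of its edges increases $\rho_\alpha$ by Lemma \ref{lem4}. Subdividing a pendant edge then restores the order with a further increase by Lemma \ref{lem6}.
\end{itemize}
Alternatively, you can compare $w$ with $v_i$ itself at any distance and move one edge as above.

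Your worry that the move might produce an isomorphic graph is unfounded. Any $G'\in\mathscr{T}_n^{k,i}$ with $\rho_\alpha(G')>\rho_\alpha(G_i^*)$ already contradicts maximality, so neither iteration nor reduced branching is needed. What you must guarantee is that no new pendant vertex appears. This also rules out moving \emph{all} core edges of $v_i$ when $w$ is its only tree neighbour.
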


\begin{proof}
    We recall that $G_i^*$ is derived from some graph in Figure \ref{f1} by attaching trees to some of its vertices. Let $X$ be the $A_{\alpha}$-Perron vector of $G_i^*$.
    \begin{enumerate}[label={\upshape (\alph*)}]
        \item If possible, let there be two distinct vertices $v_a$ and $v_b$ in $G_i^*$, which are attached to pendant trees $T_a$ and $T_b$, respectively. Let $v_c$ and $v_d$ be vertices in $T_a$ and $T_b$, respectively, such that $d(v_a, v_c)= d(v_b, v_d) =1$. Without loss of generality, we assume $x_{v_a} \geq x_{v_b}$. Let $G_i^{**}= G_i^* -\{v_b v_d\} +\{ v_a v_d\}$. Then by Lemma \ref{lem3}, $\rho_{\alpha}(G_i^{**}) > \rho_{\alpha}(G_i^{*})$. This contradicts the maximality of $\rho_{\alpha}(G_i^*)$ as $G_i^{**} \in \mathscr{T}_n^{k,i}$. Hence, the result follows.
        \item If possible, let there be a vertex in $V(T_i) - \{v_i\}$ with degree at least three. Among all the vertices with degree at least three in $V(T_i) - \{v_i\}$, let $w$ be a vertex such that $d(v_i, w)$ is the minimum. Now two cases arise.
        \medskip\\ 
        \noindent\textit{Case 1: $d(v_i,w) =1$.} \par 
        First, let $x_{v_i} \geq x_w$. Since $d(w) \geq 3$, we find a vertex $u$ $(\neq v_i)$ adjacent to $w$ in $T_i$. We construct $G_i^{**}= G_i^* -\{ wu\} +\{ v_iu\}$. Then $G_i^{**} \in \mathscr{T}_n^{k,i}$, and by Lemma \ref{lem3}, $\rho_{\alpha}(G_i^{**}) > \rho_{\alpha}(G_i^{*})$, which is a contradiction. If $x_{v_i} < x_w$, then we take a vertex $y$ adjacent to $v_i$, which lies on a cycle containing $v_i$ in $G^*$. We form $G_i^{***} = G_i^* - \{ yv_i\} + \{ yw\}$. Then $G_i^{***} \in \mathscr{T}_n^{k,i}$. By Lemma \ref{lem3}, $\rho_{\alpha}(G_i^{***}) > \rho_{\alpha}(G_i^{*})$, and we get a contradiction again.
        \medskip\\         
        \noindent\textit{Case 2: $d(v_i,w) \geq 2$.}\par
        Let $P$ be the shortest $v_i - w$ path in $T_i$. Since, among all vertices in $V(T_i)-\{v_i\}$ with degree at least three, $w$ is nearest to $v_i$, and since $d(v_i,w) \geq 2$, $P$ is an internal path in $G_i^*$ of length at least two. Identifying any two adjacent vertices on $P$ and then subdividing a pendant edge of $T_i$ results in a new graph in $\mathscr{T}_n^{k,i}$ with strictly larger $A_{\alpha}$-spectral radius, by Lemma \ref{lem4} and Lemma \ref{lem6}. Therefore, we get a contradiction.\par
        Thus, the degree of every vertex in $V(T_i) - \{v_i\}$ is at most two. Since $v_i$ is the unique vertex in $G_i^*$, which is attached to a pendant tree, and the number of pendant vertices in $G_i^*$ is $k$, $T_i$ is consisting of $k$ pendant paths.\par
        Among the $k$ pendant paths attached
        to $v_i$, if possible, let there be two paths $P_j$ and $P_l$ with length of $P_j \geq$ length of $P_l$ + $2$. We construct a graph $G_i^{**}$ from $G_i^{*}$ by deleting the pendant vertex, say $z$, of $P_j$, and making $z$ adjacent to the pendant vertex of $P_l$. Then $G_i^{**} \in \mathscr{T}_n^{k,i}$. Now by Lemma \ref{lem5}, we have $\rho_{\alpha}(G_i^{**}) > \rho_{\alpha}(G_i^{*})$, a contradiction. Hence the $k$ pendant paths attached to $v_i$ have nearly equal lengths.
    \end{enumerate}
     This completes the proof of Lemma \ref{r8}.
\end{proof}

\begin{lemma}\label{r9}
    Let $\alpha \in [0,1)$. For each $i=3,4,6$ and $7$, let $G_i^*$ be the graph with largest $A_{\alpha}$-spectral radius in $\mathscr{T}_n^{k,i}$. Then for any internal path $P=v_0 v_1 \ldots v_k$ in $G_i^*$, we get
    \begin{enumerate}[label={\upshape (\alph*)}]
        \item If $v_0\neq v_k$, then the length of $P$ is at most $2$. Further, if the length of $P$ is exactly $2$, then $v_0$ and $v_k$ must be adjacent in $G_i^*$.
        \item If $v_0= v_k$, then the length of $P$ is exactly 3.
    \end{enumerate}
\end{lemma}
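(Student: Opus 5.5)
The plan is to argue by contradiction in each part. Any violation produces an internal path with an edge that can be contracted. Contracting it and then subdividing a pendant edge gives a graph in the same class $\mathscr{T}_n^{k,i}$ (same order, same number $k$ of pendant vertices, same cycle structure) with strictly larger $A_{\alpha}$-spectral radius. This mirrors Case 2 in the proof of Lemma \ref{r8}(b). By Lemma \ref{r8}, $G_i^*$ has $k\ge 1$ pendant paths attached at $v_i$, so a pendant edge is always available for subdivision. The main tool is Lemma \ref{lem4}. If $G'$ is the graph obtained from $G_i^*$ by identifying two adjacent vertices $v_j,v_{j+1}$ of $P$, then $G_i^*$ is obtained from $G'$ by subdividing an edge on an internal path, so $\rho_\alpha(G')>\rho_\alpha(G_i^*)$. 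Subdividing a pendant edge of $G'$ restores the order $n$ and, by Lemma \ref{lem6}, increases $\rho_\alpha$ further. This contradicts maximality provided the contracted graph is simple and has the same number of cycles.

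For (a), suppose $v_0\neq v_k$ and $P$ has length at least $3$. Contract an edge $v_jv_{j+1}$ with $1\le j\le k-2$, that is, an edge interior to $P$ away from both ends. The internal vertices have degree $2$, so no multiple edge arises, and the path simply shortens to length at least $2$. The cycle structure (the underlying graph of Figure \ref{f1}) is unchanged, so the number of cycles $i$ is preserved. If $P$ has length exactly $2$ and $v_0v_k\notin E(G_i^*)$, contract $v_0v_1$ instead. The result is the edge $v_0v_k$, which does not create a multi-edge precisely because $v_0,v_k$ are nonadjacent. Topologically the same configuration survives, so the count of cycles is again $i$ and we are still in $\mathscr{T}_n^{k,i}$. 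The contradiction argument above then finishes (a).

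For (b), when $v_0=v_k$ the internal path is a cycle through $v_0$ in which all other vertices have degree $2$. Simplicity forces length at least $3$. If the length is at least $4$, contract an edge $v_1v_2$. The cycle shrinks to length at least $3$, which is still a simple cycle, and the cycle count is preserved. The same contradiction yields length exactly $3$.

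I expect the main obstacle to be the careful verification that contraction preserves membership in $\mathscr{T}_n^{k,i}$. First, the number of pendant vertices must stay $k$: contracting an edge between two vertices of degree at least $2$ creates no pendant vertex, and subdividing a pendant edge does not change the count. Second, the number of cycles must stay $i$: no edge is destroyed and the underlying multigraph skeleton is unchanged. Third, in the length-$2$ case of (a) I must make sure the chosen contracted edge is one whose endpoints become a legitimate simple edge. I would also note that Lemma \ref{lem4} needs the subdivided edge to lie on an internal path of the larger graph $G_i^*$, which holds by construction.
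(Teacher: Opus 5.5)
Your proposal is correct and follows the paper's argument: contract an edge of the offending internal path, subdivide a pendant edge, and combine Lemma \ref{lem4} (applied with $G'$ as the graph being subdivided) with Lemma \ref{lem6} to contradict maximality. The paper contracts $v_0v_1$ where you contract $v_jv_{j+1}$ or $v_1v_2$, which is immaterial, and you spell out the class-membership checks that the paper leaves implicit. One small correction: Lemma \ref{lem4} requires the edge to lie on an internal path of the contracted graph $G'$, not of $G_i^*$; this holds here because contraction leaves the endpoint degrees unchanged.
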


\begin{proof}
    \begin{enumerate}[label={\upshape (\alph*)}]
        \item If possible, let the length of $P$ be at least $3$. If we contract the edge $v_0 v_1$, and subdivide a pendant edge of $G_i^*$, the resultant graph will still be in $\mathscr{T}_{n}^{k,i}$, and by Lemma \ref{lem4} and Lemma \ref{lem6}, it will have strictly larger $A_{\alpha}$-spectral radius than that of $G_i^*$, a contradiction. Hence, the length $P$ is at most $2$.
        Next, let the length of $P$ be exactly $2$. If possible, let $v_0$ and $v_k$ be non adjacent. Then, by a similar argument as above, we obtain a contradiction, and hence the result follows.
        \item If possible, let the length of $P$ be greater than $3$. Then we contract the edge $v_0 v_1$, subdivide a pendant edge of $G_i^*$, proceed in the similar way as in the proof of part (a), and obtain a contradiction. \qedhere
    \end{enumerate}    
\end{proof}

\begin{lemma}\label{r2}
    Let $\alpha \in [0, 1)$ and $n,k$ be integers with $1\leq k \leq n-7$. Then among all the graphs in $\mathscr{T}_n^{k,7}$, $\mathcal{T}_7$ stands out as the sole graph to have the largest $A_{\alpha}$-spectral radius.
\end{lemma}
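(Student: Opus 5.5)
Let $G_7^*$ be a graph in $\mathscr{T}_n^{k,7}$ with the largest $A_{\alpha}$-spectral radius. The plan is to show that Lemmas \ref{r8} and \ref{r9} force $G_7^*=\mathcal{T}_7$.

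Every graph in $\mathscr{T}_n^{k,7}$ has exactly seven cycles. Its cycle structure is therefore the $7$-cycle configuration of Figure \ref{f1}, namely a subdivision of $K_4$ with pendant trees attached. So the $2$-core of $G_7^*$ is obtained from $K_4$ by replacing each of its six edges with an internal path between two branch vertices of degree at least $3$. Because the core is a subdivided $K_4$, no internal path is a cycle, so only part (a) of Lemma \ref{r9} is relevant.

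\textbf{Step 1: the core is $K_4$.} By Lemma \ref{r9}(a), each of the six internal paths has length $1$ or $2$. If one of them, joining branch vertices $a$ and $b$, has length exactly $2$, then $a$ and $b$ must also be adjacent in $G_7^*$. The only edge between $a$ and $b$ in the core would have to be this same subdivided path, which is a contradiction. An edge $ab$ lying in a pendant tree is impossible, since both $a$ and $b$ are core vertices. Hence every internal path has length $1$, and the core is exactly $K_4$.

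\textbf{Step 2: the pendant structure.} Lemma \ref{r8}(a) gives a unique vertex $v_7$ of this $K_4$ carrying a pendant tree $T_7$. Lemma \ref{r8}(b) says $T_7$ consists of $k$ pendant paths of nearly equal lengths attached at $v_7$. The vertex count then fixes the total length of these paths as $n-4$. The hypothesis $k\le n-7$ leaves enough vertices for this; in fact $k\le n-4$ would already suffice here.

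\textbf{Step 3: uniqueness up to isomorphism.} $K_4$ is vertex-transitive, so the choice of $v_7$ does not matter. Nearly-equal lengths with fixed sum $n-4$ and $k$ parts determine the multiset of path lengths uniquely. Hence $G_7^*\cong\mathcal{T}_7$. Every maximizer has this form, so the maximizer is unique. A maximizer exists because the class is finite.

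\textbf{Main obstacle.} The delicate point is Step 1: making sure that the cycle configuration with seven cycles is exactly the $K_4$-subdivision. Given the cited classification, this is the only possibility. One must also rule out the adjacency required by Lemma \ref{r9}(a) being realized by some other edge. In a subdivided $K_4$ there is only one branch path between any two branch vertices, so that alternative edge cannot exist. The remaining steps are direct applications of the earlier lemmas.
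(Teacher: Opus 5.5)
\textbf{There is a genuine gap in your Step 1.}

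You apply Lemma~\ref{r9}(a) to all six branch paths of the core. In the paper's sense, however, a path is internal only if every interior vertex has degree $2$ in $G_7^*$. At that point you do not yet know where the pendant tree is rooted.

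Suppose the root $v$ is an interior vertex of the branch path from $v_1$ to $v_2$. Then $d(v)\ge 3$, so that branch path is not internal, and Lemma~\ref{r9}(a) applies only to the two pieces $v_1\cdots v$ and $v\cdots v_2$. A piece of length $\ge 3$ is excluded. A piece of length $2$ would need its endpoints adjacent, which is impossible. So both pieces have length $1$.

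This leaves exactly one configuration that you never exclude: $K_4$ with the edge $v_1v_2$ subdivided once by $v$, and all $k$ nearly equal pendant paths hung at $v$. This graph lies in $\mathscr{T}_n^{k,7}$. Every internal path in it has length $1$, and it has a unique pendant-tree root carrying nearly equal paths. So it satisfies every conclusion of Lemmas~\ref{r8} and~\ref{r9}, and no argument using only those two lemmas can rule it out. Relatedly, your Step~2 phrase ``a unique vertex $v_7$ of this $K_4$'' presupposes that the root is a branch vertex.

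\textbf{The missing idea} is a direct Perron-vector comparison via Lemma~\ref{lem3}; this is how the paper closes the case in Claim~1.
\begin{itemize}
\item If $x_v\ge x_{v_1}$, replace an edge $v_1v_p$ by $vv_p$, where $v_p$ is one of the two other $K_4$-vertices. Then $v_p\neq v_2$ and $v_p\notin N(v)$, so the lemma applies.
\item If $x_v<x_{v_1}$, replace the first edge $vv_q$ of one pendant path by $v_1v_q$.
\end{itemize}
In both cases the new graph is again a once-subdivided $K_4$ with $k$ pendant vertices, hence lies in $\mathscr{T}_n^{k,7}$. By Lemma~\ref{lem3} it has strictly larger $A_\alpha$-spectral radius, contradicting maximality.

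Add this case, and apply Lemma~\ref{r9}(a) to the pieces split at the root rather than to whole branch paths. The rest of your argument then goes through: vertex-transitivity of $K_4$, and uniqueness of the nearly-equal length distribution with total $n-4$.
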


\begin{proof}
    The seven cycles within any graph of $\mathscr{T}_n^{k,7}$ can be configured in only one possible way as shown in Figure \ref{f3}. A graph in $\mathscr{T}_n^{k,7}$ is obtained by attaching trees to some vertices of the graph in the said figure. For convenience, $v_1, v_2, v_3, v_4$ are four vertices, and $P_{l_1 +1}, P_{l_2+1}, \ldots, P_{l_6+1}$ are six paths (of lengths $l_1, l_2, \ldots, l_6$, respectively) as shown in the figure. Let $G^* \in \mathscr{T}_n^{k,7}$ be the graph having the largest $A_{\alpha}$-spectral radius. We denote the $A_{\alpha}$-Perron vector of $G^*$ by $X$. From Lemma \ref{r8}, we know that $G^*$ is derived by attaching $k$ pendant paths of nearly equal lengths to one single vertex, say $v$, of the graph shown in the figure \ref{f3}. Therefore, to complete the proof of the lemma, it suffices to prove the following claim.\medskip\\
    \noindent\textbf{Claim $1$:} All the paths $P_{l_1 +1}, P_{l_2+1}, \ldots, P_{l_6+1}$ of $G^*$ given in Figure \ref{f3} are of length one.
    \begin{proof}[\unskip\nopunct]\let\qed\relax
        \textbf{Proof of Claim $1$:} As per \cite{spec_rad_of_tricyclic_by_geng_li}, each $l_i \geq 1$. We have to prove $l_1 =l_2=\ldots = l_6 =1$. If possible let $l_1 \geq 2$, i.e., $P_{l_1+1}$ is a path of length at least two connecting $v_1$ and $v_2$.
        \begin{figure}[t]
        \centering
        \includegraphics[scale=.7]{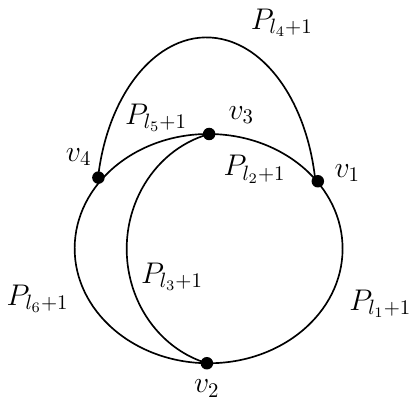}
        \caption{Only possible cycle configuration in graphs of $\mathscr{T}_n^{k,7}$.}
        \label{f3}
        \end{figure}
        Let $P_{l_1+1} = v_1 u_1 u_2 \ldots u_s$, where $u_s=v_2$ and $s \geq 2$. Note that $d(v_1) \geq 3$, $d(v_2) \geq 3$. $G^*$ has the vertex $v$ where the $k$ paths of nearly equal length are attached. Suppose $v \neq u_i$, $i = 1, 2, \ldots, s-1$. Then $P_{l_1+1}$ is an internal path. By Lemma \ref{r9}(a), length of $P_{l_1+1}$ i.e. $l_1$ can not be $2$ or more, a contradiction according to the initial assumption $l_1 \geq 2$. But if $v$ is some $u_t$, $1 \leq t \leq s-1$, then either at least one from the paths $v_1 u_1 \ldots u_t (=v)$ and $(v=)u_t u_{t+1} \ldots v_2$ will be internal path of length at least two in $G^*$ (in that scenario, using Lemma \ref{r9}(a), again we arrive at a contradiction), or $d(v_1, v) = d(v, v_2) = 1$. To discuss the later case, let $v_p$ ($\neq v$) be an adjacent vertex of $v_1$ in $G^*$, and $v_q$ be an adjacent vertex of $v$ on any of the pendant paths attached at $v$. We note that $v_p \neq v_2$, because $v_2$ lies on $P_{l_1+1}, P_{l_3+1}$ and $P_{l_6+1}$, whereas $v_p$ lies only on $P_{l_2+1}$ or $P_{l_4+1}$ (see Figure \ref{f3}). If $x_v \geq x_{v_1}$, we remove the edge $v_1 v_p$ and insert the edge $v v_p$ to $G^*$. The new graph will be in $\mathscr{T}_{n}^{k,7}$, and by Lemma \ref{lem3} this graph will have strictly larger $A_{\alpha}$-spectral radius than that of $G^*$, a contradiction. Otherwise if $x_v < x_{v_1}$, we remove $v v_q$ and insert $v_1 v_q$ to $G^*$. The new graph will be again in $\mathscr{T}_{n}^{k,7}$, and its $A_{\alpha}$-spectral radius is strictly greater than that of $G^*$ by Lemma \ref{lem3}, a contradiction again. Therefore $l_1 = 1$. Similarly we can show that $l_2 = l_3 =l_4 = l_5 = l_6 = 1$.
    \end{proof}
    This concludes that $G^* =\mathcal{T}_7$. 
\end{proof}

\begin{lemma}\label{r3}
    Let $\alpha \in [0, 1)$ and $n,k$ be integers with $1\leq k \leq n-7$. Then out of all graphs in $\mathscr{T}_n^{k,6}$, $\mathcal{T}_6$ stands out as the sole graph to have the largest $A_{\alpha}$-spectral radius.
\end{lemma}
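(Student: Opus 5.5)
The plan follows the template of the proof of Lemma \ref{r2}. Let $G^* \in \mathscr{T}_n^{k,6}$ have the largest $A_{\alpha}$-spectral radius, with Perron vector $X$. By Lemma \ref{r8}, $G^*$ is obtained from a cyclic core with exactly six cycles by attaching $k$ pendant paths of nearly equal lengths at a single vertex $v$. By \cite{spec_rad_of_tricyclic_by_geng_li}, a tricyclic graph with exactly six cycles has one of a small number of core configurations from Figure \ref{f1}. Typically this is a theta graph (three internally disjoint paths $P_{l_1+1},P_{l_2+1},P_{l_3+1}$ between $v_1,v_2$) together with an extra cycle sharing a vertex with it or joined to it by a path, or alternatively two paths glued in other ways. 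For each such configuration we label the branch vertices and the internal paths, exactly as in Figure \ref{f3}.

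The first reduction uses Lemma \ref{r9}. Every internal path not containing $v$ as an interior vertex has length at most $2$, and if it has length $2$ its ends are adjacent. Every internal cycle through a single branch vertex has length exactly $3$. A connecting path between two blocks that does not pass through $v$ is contracted to a point, using Lemma \ref{lem4} and Lemma \ref{lem6} as in the proof of Lemma \ref{r9}; this strictly increases $\rho_\alpha$ while staying in $\mathscr{T}_n^{k,6}$. This forces the core into a few small candidates: the theta part becomes $K_4-e$ (paths of lengths $1,2,2$), and the extra cycle becomes a triangle sharing a vertex with it. One then needs the other six-cycle configurations to be excluded or reduced to $G_3$. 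For the case where $v$ lies in the interior of some path, we repeat the argument of Claim 1 in Lemma \ref{r2}. Either a subpath from $v$ to a branch vertex is an internal path of length $\ge 2$, contradicting Lemma \ref{r9}, or $v$ is adjacent to both ends. In the latter case we compare $x_v$ with $x_{v_1}$ and move one edge (a cycle edge at $v_1$ to $v$, or a pendant edge at $v$ to $v_1$) by Lemma \ref{lem3}. The moved edge must be chosen so that the number of cycles stays six, which needs a check in each configuration.

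The second step locates $v$ on the reduced core $G_3$. Here we use Lemma \ref{lem3} repeatedly: the vertex carrying the pendant paths should be the vertex of maximum degree $v$ in $G_3$ as drawn in Figure \ref{f2}. Suppose instead the paths sit at some vertex $w\ne v$. If $x_w\ge x_v$, we move a neighbour of $v$ that is not adjacent to $w$ onto $w$. If $x_v> x_w$, we move the first vertex of each pendant path from $w$ to $v$. In either case $\rho_\alpha$ strictly increases. Since the first kind of move may change the isomorphism type of the core, we must confirm that it stays in $\mathscr{T}_n^{k,6}$, or else use only the second kind of move, which always keeps the core fixed.

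I expect the main obstacle to be the edge-moving step when $x_w\ge x_v$. We must find a neighbour of $v$, non-adjacent to $w$, whose transfer keeps exactly six cycles. If such a move is not available, the fallback is to compare candidate placements directly via symmetric vertex classes of $G_3$. Vertices in the same automorphism orbit give isomorphic graphs, so only a couple of placements need ruling out, and we can do this with the same edge-rotation trick applied to a pendant edge. Uniqueness follows because every inequality used is strict.
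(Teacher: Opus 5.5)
Your description of the six-cycle cores is wrong, and the reduction fails as a result. A theta graph together with one further cycle, whether sharing a vertex with it or joined to it by a path, has exactly $3+1=4$ cycles. So it belongs to $\mathscr{T}_n^{k,4}$ (Lemma \ref{r4}), not to $\mathscr{T}_n^{k,6}$. Correspondingly, the core you reduce to, $K_4-e$ with a triangle sharing a vertex, is a four-cycle graph (essentially the core of $\mathcal{T}_4$), not $G_3$.

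The cores with exactly six cycles are all $2$-connected, with no bridge paths and no pendant cycles. They are: (a) four internally disjoint paths $P_{l_1+1},\dots,P_{l_4+1}$ joining two vertices $v_a,v_b$; and the configurations (b), (c) of Figure \ref{f4}. After suppressing degree-$2$ vertices, (b) and (c) are a triangle with two sides doubled and a $4$-cycle with two opposite sides doubled. In each of them, two branch vertices $u,v$ are joined by a unique internal path. The correct $G_3$ is configuration (a) with lengths $1,2,2,2$, i.e. the edge $v_av_b$ joined to three independent vertices, with the pendant paths at $v_a$ (or $v_b$).

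As an aside, Lemmas \ref{lem4} and \ref{lem6} cannot shrink a bridge path to a point. The last step is a vertex splitting, not a subdivision, which is why the paper uses Lemma \ref{lem3} for such merges. This is harmless here only because six-cycle cores have no bridges.

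Consequently the substantive steps of the proof are missing.

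\begin{enumerate}
\item Configurations (b) and (c) must be excluded by cases on the length of the unique internal $u$--$v$ path.
\begin{itemize}
\item If it has length $1$, move a core edge at $v$ (away from $u$) to $u$, or vice versa, according to whether $x_u\ge x_v$. By Lemma \ref{lem3} this increases $\rho_\alpha$ and produces another six-cycle configuration.
\item If it has length $2$, either the pendant paths sit at its middle vertex and a similar rotation works, or it is an internal path of length $2$ with non-adjacent ends, contradicting Lemma \ref{r9}(a).
\item If it has length at least $3$, it contains an internal subpath of length at least $2$ with non-adjacent ends, again contradicting Lemma \ref{r9}(a).
\end{itemize}
\item In (a) you need three facts. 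Each $l_i\le 2$, by Lemma \ref{r9}(a) applied to the path or to its subpaths on either side of the attachment vertex. Not all $l_i$ equal $2$, since otherwise some path is internal of length $2$ between the non-adjacent $v_a,v_b$. At most one $l_i$ equals $1$, by simplicity.
\end{enumerate}

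Only then does your final placement step apply, and there it is sound. Up to symmetry, the only alternative placement is the middle vertex $u_a$ of a length-$2$ path. Moving the edge $v_ac$, with $c$ another middle vertex, to $u_ac$ yields configuration (b). So the graph stays in $\mathscr{T}_n^{k,6}$, which resolves the worry you raised.
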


\begin{proof}
    Since every $G \in \mathscr{T}_n^{k,6}$ has exactly $6$ cycles, the configuration of the cycles (excluding the possible trees attached) in $G$ will be one among $(a)$, $(b)$ or $(c)$ in Figure \ref{f4}. Let $G^* \in \mathscr{T}_n^{k,6}$ be having the largest $A_{\alpha}$-spectral radius among all graphs in $\mathscr{T}_n^{k,6}$. We denote the $A_{\alpha}$-Perron vector of $G^*$ by the column vector $X$. By Lemma \ref{r8}, it follows that $G^*$ is the graph obtained by attaching $k$ paths of nearly equal lengths to a unique vertex of the graph $(a)$, $(b)$ or $(c)$. Now we claim the following statement.
    \begin{figure}[t]
    \centering
    \includegraphics[scale=.7]{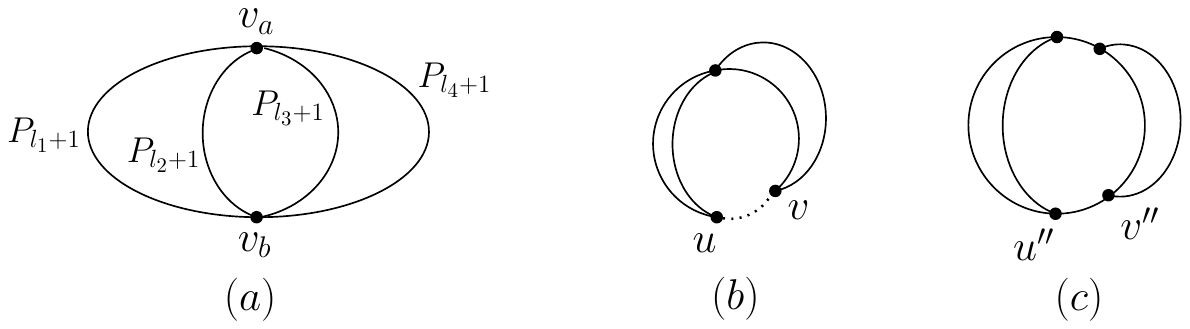}
    \caption{All possible configurations of six cycles in graphs of $\mathscr{T}_n^{k,6}$.}
    \label{f4}
    \end{figure}
    \medskip\\
    \textbf{Claim 1:} The cycle configuration in $G^*$ must be of type $(a)$ only.
    \begin{proof}[\unskip\nopunct]\let\qed\relax
        \textbf{Proof of Claim 1:} If possible let the configuration be type $(b)$. Let $u$ and $v$ be the vertices of $G^*$ as shown in Figure \ref{f4}$(b)$. We note that the dotted $u-v$ path in the said figure is the unique internal path from $u$ to $v$ in $(b)$. All other $u-v$ paths in $(b)$ are not internal paths, since each of them contains at least one internal vertex of degree greater than 2. Now three cases arise here.
        \medskip\\
        \textit{Case 1: Length of the internal path from $u$ to $v$ in $(b)$ is $1$.}\par
        Let $u^{\prime}$ ($\neq v$) and $v^{\prime}$ ($\neq u$) be the adjacent vertices of $u$ and $v$, respectively, in graph $(b)$. If $x_u \geq x_v$, then remove the edge $vv^{\prime}$ and insert the edge $uv^{\prime}$. The new graph will be still in $\mathscr{T}_n^{k,6}$, and by Lemma \ref{lem3}, its $A_{\alpha}$-spectral radius is strictly greater than that of $G^*$, which is a contradiction. If $x_u < x_v$, then removal of $uu^{\prime}$ and insertion of $vu^{\prime}$ will strictly increase the $A_{\alpha}$-spectral radius, which is a contradiction again.
        \medskip\\
        \textit{Case 2: Length of the internal path from $u$ to $v$ in $(b)$ (the path is not necessarily an internal path in $G^*$) is $2$.}\par
        We denote the internal path in $(b)$ by $P_3 = u u_1 v$. In this case, we might encounter two situations. Firstly, let $u_1$ be the vertex such that the $k$ pendant paths are attached at it. Let $u_2$ be a vertex adjacent to $u_1$ in some attached pendant path, and $v_1$ ($\neq u_1$) be a vertex adjacent to $v$ in $G^*$. If $x_{u_1} \geq x_v$, then after removing the edge $v v_1$, and adding the edge $u_1 v_1$ to $G^*$, the resultant graph will be still in $\mathscr{T}_n^{k,6}$. By Lemma \ref{lem3}, this new graph has strictly greater $A_{\alpha}$-spectral radius than that of $G^*$, a contradiction. If $x_{u_1} < x_v$, then removing $u_1 u_2$, and adding $v u_2$ to $G^*$ will increase the $A_{\alpha}$-spectral radius by Lemma \ref{lem3}, keeping the new graph still in $\mathscr{T}_n^{k,6}$, which is again a contradiction. Thus we are done with the first situation. On the other hand, if the pendant paths are not attached at $u_1$, then the internal path $P_3= u u_1 v$ in $(b)$ will be an internal path (of length $2$) in $G^*$ too, which leads to a contradiction according to Lemma \ref{r9}(a).
        \medskip\\
        \textit{Case 3: Length of the internal path from $u$ to $v$ in $(b)$ (not necessarily internal in $G^*$) is greater than $2$.}\par
        Whether $k$ pendant paths are attached to some vertex of the internal path in $(b)$, or not; we can always find a subpath of that path, which will be an internal path of length at least two in $G^*$. a contradiction according to Lemma \ref{r9}(a).\par
        Thus from the three cases discussed above, it is proved that configuration $(b)$ is not present in $G^*$. If possible let the configuration of the cycles be type $(c)$, and $u^{\prime \prime}$ and $v^{\prime \prime}$ be the vertices of $G^*$ in Figure \ref{f4}$(c)$. Then similarly as we did for $(b)$, we get a contradiction here also. This implies that $G^*$ can not have configuration $(c)$ too. Hence the only possible cycle configuration in $G^*$ is $(a)$. This concludes the proof of Claim 1.
    \end{proof}
     Now let $v_a$, $v_b$ be the two vertices, and $P_{l_1+1}, P_{l_2+1}, P_{l_3+1}, P_{l_4+1}$ be the four paths in $(a)$ as shown in Figure \ref{f4}. We claim the following statement.
    \medskip\\
    \textbf{Claim 2:} One of $l_1, l_2, l_3$ and $l_4$ is equal to $1$, and the other three are equal to $2$.
    \begin{proof}[\unskip\nopunct]\let\qed\relax
        \textbf{Proof of Claim 2:} If possible let $l_1 \geq 3$. Then irrespective of the location of the vertex, at which the $k$ paths of nearly equal lengths are attached in $(a)$ to make it $G^*$, we can always find an internal path (which is actually a subpath of $P_{l_1+1}$) in $G^*$ of length at least two, contradicting Lemma \ref{r9}(a). Hence $l_1 \leq 2$. $P_{l_i+1}$'s can not be paths of length zero as per \cite{spec_rad_of_tricyclic_by_geng_li}. Therefore $l_1 =1$ or $2$. In the similar way, we verify $l_i =1$ or $2$, $i=2, 3, 4$. Now if $l_1 =l_2 =l_3 =l_4 =2$, then we can always choose a path among those four paths, which is an internal path of length two in $G^*$, leading to a contradiction, as per Lemma \ref{r9}(a). Therefore at least one of $l_1, l_2, l_3$ and $l_4$ must be equal to $1$. Now any two of the six cycles in $(a)$ have two vertices $v_a$ and $v_b$ in common, so the number of paths of length one among $P_{l_1+1}, P_{l_2+1}, P_{l_3+1}, P_{l_4+1}$ is at most one, otherwise the graph would have had multiple edges. Hence exactly one of the four paths is of length one, and the other three are of length two.
    \end{proof}
    Assessing all the information gathered, we see that we are now left with only two possible candidates for $G^*$; one is obtained by attaching $k$ paths of nearly equal lengths to vertex $u_a$, and the other by attaching $k$ paths of nearly equal lengths to vertex $u_b$ of the graph shown in Figure \ref{f5} (the later graph is $\mathcal{T}_6$, as mentioned earlier). If possible let $G^*$ be the graph obtained by attaching the paths to $u_a$. Suppose $u_c$ is an adjacent vertex of $u_a$ on one of the pendant paths attached at $u_a$, and $u_d$ ($\neq u_a$, also not adjacent to $u_a$) is an adjacent vertex of $u_b$ in $G^*$. If $x_{u_a} \geq x_{u_b}$, we remove the edge $u_b u_d$ and insert the edge $u_a u_d$ to $G^*$. The new graph will be in $\mathscr{T}_{n}^{k,6}$, and by Lemma \ref{lem3} this graph has strictly larger $A_{\alpha}$-spectral radius than that of $G^*$, a contradiction. On the other hand, if $x_{u_a} < x_{u_b}$, we remove $u_a u_c$ and insert $u_b u_c$ to $G^*$. The new graph will be again in $\mathscr{T}_{n}^{k,6}$, and its $A_{\alpha}$-spectral radius is strictly greater than that of $G^*$ by Lemma \ref{lem3}, a contradiction again. Hence $G^*$ is the graph obtained by attaching $k$ paths of nearly equal lengths to vertex $u_b$ of the graph in Figure \ref{f5}, i.e., $G^* = \mathcal{T}_6$. This brings the proof to its conclusion.\begin{figure}[t]
    \centering
    \includegraphics[scale=.7]{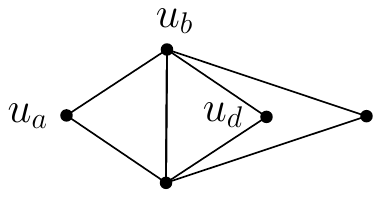}
    \caption{Final configuration of six cycles in $G^*$.}
    \label{f5}
    \end{figure}
\end{proof}

\begin{lemma}\label{r4}
    Let $\alpha \in [0, 1)$ and $n,k$ be integers with $1\leq k \leq n-7$. Then out of all graphs in $\mathscr{T}_n^{k,4}$, $\mathcal{T}_4$ stands out as the sole graph to have the largest $A_{\alpha}$-spectral radius.
\end{lemma}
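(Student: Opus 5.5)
The plan follows the template of Lemmas \ref{r2} and \ref{r3}. Let $G^*\in\mathscr{T}_n^{k,4}$ have the largest $A_{\alpha}$-spectral radius and let $X$ be its Perron vector. By Lemma \ref{r8}, $G^*$ is obtained from a $4$-cycle core (a tricyclic graph with no pendant vertices and exactly four cycles) by attaching $k$ pendant paths of nearly equal lengths at a single vertex $v$. It therefore suffices to determine the core and the attachment vertex.

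Following \cite{spec_rad_of_tricyclic_by_geng_li}, the core with exactly four cycles is of one of the following kinds.
\begin{enumerate}[label={\upshape(\roman*)}]
\item A $\theta$-graph (three internally disjoint paths between two vertices $w_1,w_2$) together with a third cycle, which is either attached at a vertex or joined to the $\theta$-graph by a path.
\item A configuration in which the third cycle shares a path with the $\theta$-graph in a way that creates only one extra cycle.
\end{enumerate}

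\textbf{Claim 1: no joining path of positive length between the $\theta$-part and the extra cycle.} This mirrors Cases 1--3 of Claim 1 in Lemma \ref{r3}. If the joining $u$--$v$ path has length $1$, Lemma \ref{lem3} moves a neighbour of the vertex with smaller Perron component to the other end, keeping the graph in $\mathscr{T}_n^{k,4}$; this contradicts maximality. If the path has length $\ge 2$, either some subpath is an internal path of length $\ge 2$ whose ends are nonadjacent, contradicting Lemma \ref{r9}(a), or the pendant paths sit at an inner vertex. In the latter case Lemma \ref{lem3} applies again, comparing $x$ at that vertex with $x$ at an end of the path.

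\textbf{Claim 2: the cycle lengths are forced.} By Lemma \ref{r9}(b), a cycle hanging at a single vertex $v_0$ with all other vertices of degree $2$ is a triangle. By Lemma \ref{r9}(a), every $w_1$--$w_2$ path in the $\theta$-part has length at most $2$, and a path of length $2$ forces $w_1w_2$ to be an edge. Since the graph is simple, exactly one of the three paths has length $1$ and the other two have length $2$, so the $\theta$-part is $K_4-e$. Paths that contain $v$ as an inner vertex are split at $v$, and each piece is handled the same way. A piece of length $\ge 2$ gives a contradiction via Lemma \ref{r9}(a). Otherwise the pieces have length $1$, and we do the $x_v$ versus $x_{w}$ swap with Lemma \ref{lem3}, exactly as in Claim 1 of Lemma \ref{r2}. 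This should reduce the core to $G_2$, that is, $K_4-e$ and a triangle sharing one vertex.

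\textbf{Claim 3: the attachment vertex is $v$.} The candidates are the finitely many vertices of $G_2$, up to symmetry. For each wrong choice $u$, pick a vertex $u'$ on a pendant path adjacent to $u$, and a neighbour $w'$ of $v$ not adjacent to $u$. If $x_u\ge x_v$, move $w'$ to $u$; otherwise move $u'$ to $v$. Lemma \ref{lem3} gives a strict increase in either case. The resulting graph is either in $\mathscr{T}_n^{k,4}$ or forces the maximiser toward $v$. Hence $G^*=\mathcal{T}_4$.

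The main obstacle is Claim 3 together with the case analysis of Claim 2. One must check that every Lemma \ref{lem3} move keeps exactly four cycles, rather than producing $3$ or $6$ cycles or creating a multiple edge. This requires choosing $w'$ carefully at each candidate vertex. I would first try the vertices of degree $2$ in the $K_4-e$ part and in the triangle, explicitly listing the neighbour to transfer. If some move changes the cycle count, I would instead compare against $v$ through a different neighbour.
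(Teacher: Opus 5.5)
Your overall plan (Lemma \ref{r8}, then Lemma \ref{r9}, then Lemma \ref{lem3} swaps) matches the paper's, but Claim 3, which you yourself flag as the main obstacle, fails as formulated. In $G_2$ let $v$ be the vertex shared by the triangle $vv_sv_t$ and $K_4-e$, let $w$ be the other degree-$3$ vertex of $K_4-e$, and let $x,y$ be its degree-$2$ vertices. Suppose the pendant paths sit at $u=v_s$ and $x_u\ge x_v$. Lemma \ref{lem3} then only lets you move neighbours of $v$ from $\{w,x,y\}$, and moving any one or two of them to $v_s$ always produces a core with six cycles. Suppose instead they sit at $u=w$ and $x_w\ge x_v$. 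The movable neighbours are then $v_s,v_t$, and moving just one of them creates four internally disjoint $v$--$w$ paths, again six cycles. So no careful choice of a single $w'$ exists. Leaving the class also gives no contradiction, because nothing compares $\rho_{\alpha}(\mathcal{T}_6)$ with $\rho_{\alpha}(\mathcal{T}_4)$; the escape used in Lemma \ref{r5} works only because Lemma \ref{r1} is available there. The missing idea is to use Lemma \ref{lem3} with a multi-element $N_2$: transplant the whole triangle, or all non-triangle neighbours of its attachment vertex, at once. This is exactly how the paper proves that the pendant paths sit at the vertex $v_r$ where the third cycle is glued.

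Your Claim 2 also skips a configuration. Lemma \ref{r9}(a) applies only to internal paths. You split $\theta$-paths at the vertex carrying the pendant paths, but not at the vertex $v_r$ where the triangle is glued. If $v_r$ is an interior vertex of a $\theta$-path (configuration (b) of Figure \ref{f6}), your reduction ends at $K_4-e$ with the triangle glued at a degree-$2$ vertex. This core differs from $G_2$, and all its internal paths satisfy the conclusions of Lemma \ref{r9} wherever the pendant paths are placed. So nothing in your argument forces $G_2$, and Claim 3, which ranges only over vertices of $G_2$, never treats this core. The paper excludes it separately. With the pendant paths already at $v_r$, it compares $x_{v_a}$ and $x_{v_r}$ for an end $v_a$ of the $\theta$-path through $v_r$. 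It then either shifts the pendant paths to $v_a$, or moves to $v_r$ the neighbour $v_c$ of $v_a$ on a $\theta$-path of length $\ge 2$. Two smaller inaccuracies: your category (ii) is empty, since a third cycle sharing an edge with the $\theta$-graph yields six or seven cycles. And a split piece of length $2$ does not contradict Lemma \ref{r9}(a) when its ends are adjacent, which is precisely what happens when the pendant paths sit on the triangle.
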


\begin{proof}
    Consider the graph in Figure \ref{f6}$(a)$ which consists of three internally disjoint paths $P_{k_1+1}, P_{k_2+1}$ and $P_{k_3+1}$ with $k_1,k_2,k_3 \geq 1$ and at most one of $k_1,k_2$ and $k_3$ is exactly $1$. Then connecting this graph with a cycle $C_p$ of length $p$ by a path $P_{q+1}$ ($q\geq 0$), we obtain four possible graphs as given in Figure \ref{f6}$(b)-(e)$. A graph in $\mathscr{T}_n^{k,4}$ can be obtained by attaching trees to vertices of one of these four types of graphs. Basically \ref{f6}$(b)-(e)$ are the types of configuration of the four cycles permitted to be in any graph of $\mathscr{T}_n^{k,4}$. Let $G^* \in \mathscr{T}_n^{k,4}$ be a graph with largest possible $A_{\alpha}$-spectral radius. Like before, we denote the $A_{\alpha}$-Perron vector of $G^*$ by $X$. In the rest of the proof, by $(b)$, $(c)$, $(d)$ and $(e)$ we mean the graphs given respectively in \ref{f6}$(b)-(e)$.\par
    \begin{figure}[t]
    \centering
    \includegraphics[scale=.7]{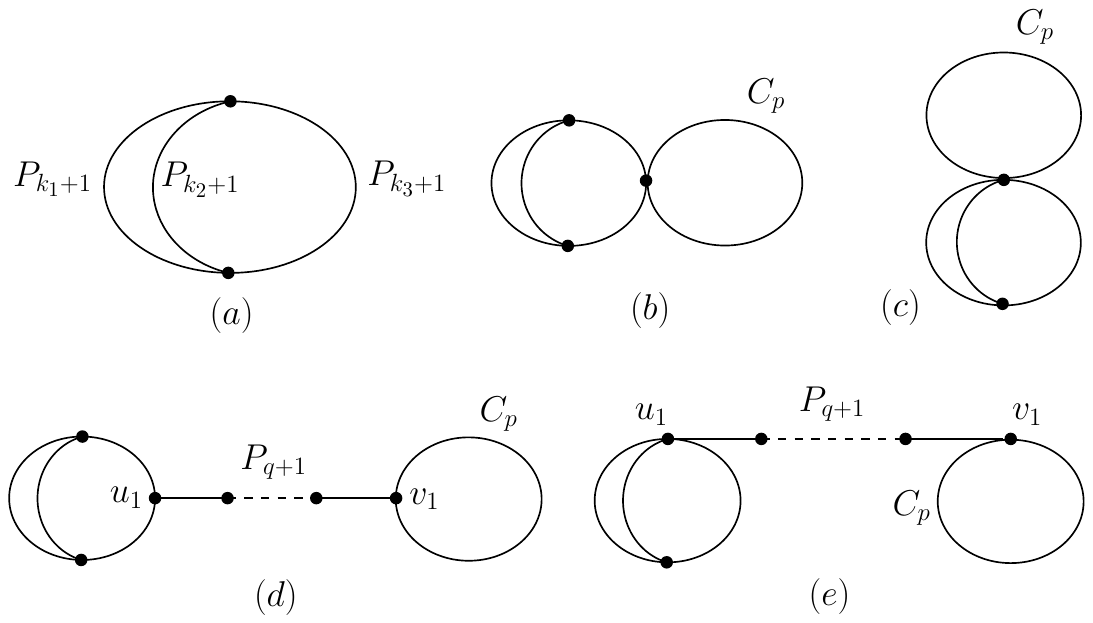}
    \caption{$(a)$ is graph derived from identifying respective initial and terminal vertices of $P_{k_1+1}, P_{k_2+1}$ and $P_{k_3+1}$, and $(b), (c), (d)$ and $(e)$ are the four possible cycle configurations in graphs of $\mathscr{T}_n^{k,4}$.}
    \label{f6}
    \end{figure}
    Applying Lemma \ref{r8}, it follows that $G^*$ is obtained by attaching $k$ pendant paths of nearly equal lengths to a unique vertex of either $(b)$, $(c)$, $(d)$ or $(e)$. Now if possible let the configuration of cycles in $G^*$ is of type $(d)$. For convenience, let $u_1$ and $v_1$ are the end vertices of $P_{q+1}$ in $(d)$ as shown in Figure \ref{f6}. Then three cases might occur: $(i)$ $d(u_1,v_1) =1$, $(ii)$ $d(u_1,v_1) =2$ and $(iii)$ $d(u_1,v_1) \geq 3$. Using the technique we used in the proof of Claim $1$ in Lemma \ref{r3}, we encounter contradiction in each of the three cases. Therefore it is not possible to have $(d)$ type of configuration of cycles in $G^*$. Similarly, we can prove that configuration $(e)$ is also not permitted in $G^*$. Hence cycles in $G^*$ are of type $(b)$ or $(c)$.\par
    Now we will prove that $C_p$ of $(b)$ (respectively, $(c)$) in $G^*$ is of length exactly three. Since $G^*$ is a simple graph, $C_p$ has length at least three. If possible let it has length four or more. Then irrespective of the position of the attached pendant paths in $G^*$, we can always find a suitable internal path of length at least $2$ on $C_p$, which is a contradiction to Lemma \ref{r9}(a). This implies that $C_p$ has length exactly three.\par
    We denote the vertex to which $C_p$ is attached with the rest of graph $(b)$ (respectively, with the rest of graph $(c)$) in $G^*$ as $v_r$. We denote the other two vertices of $C_p$ as $v_s$ and $v_t$. Let the pendant paths be attached to the vertex $v$ in $(b)$ (respectively, in $(c)$). We will prove that $v=v_r$. If possible let $v \neq v_r$. Then $v$ is either one of $v_s$ and $v_t$, or it is any vertex of $(b)$, except $v_r, v_s$ or $v_t$. First, we consider the case when $v$ is $v_s$ (respectively, $v_t$). If $x_{v_r} \geq x_v$, then we shift the pendant paths attached at vertex $v$ to the vertex $v_r$. The new graph will be still in $\mathscr{T}_n^{k,4}$, and by Lemma \ref{lem3}, its $A_{\alpha}$-spectral radius is strictly greater than that of $G^*$, a contradiction. If $x_{v_r} < x_v$, then we remove the edges between $v_r$ and its all neighbors (except $v_s$ and $v_t$), and then add edges between $v$ and all those neighbors (except $v_s$ and $v_t$) of $v_r$ in $G^*$. The resultant graph will be in $\mathscr{T}_n^{k,4}$, and by Lemma \ref{lem3}, its $A_{\alpha}$-spectral radius is strictly greater than that of $G^*$, again a contradiction. The second case is when $v$ is any vertex of $(b)$ other than $v_r, v_s$ or $v_t$. If $x_{v_r} \geq x_v$, then we shift the pendant paths attached at vertex $v$ to the vertex $v_r$. The new graph will be still in $\mathscr{T}_n^{k,4}$, and by Lemma \ref{lem3}, its $A_{\alpha}$-spectral radius is strictly greater than that of $G^*$, a contradiction. On the other hand, if $x_{v_r} < x_v$, we construct $G^{**} = G^* -\{ v_r v_s, v_r v_t\}+\{v v_s, v v_t\}$. $G^{**} \in \mathscr{T}_n^{k,4}$, and by Lemma \ref{lem3}, $\rho_{\alpha}(G^{**}) > \rho_{\alpha}(G^{*})$, a contradiction. Therefore it is proved that $v = v_r$.\par
    Now we prove that the cycles in $G^*$ must be of type $(c)$. If possible let us assume they are of type $(b)$. Let in $G^*$, $P_{k_1+1} = v_a \ldots v_r \ldots v_b$ ($k_1 \geq 2$) be a path of length $k_1$ in $(b)$, see Figure \ref{f7}. We note that $v_r$ ($\neq v_a, v_b$) is the vertex to which $C_p$ and $k$ pendant paths are attached. We choose the path of length at least two from the paths $P_{k_2+1}$ and $P_{k_3+1}$, without loss of generality, say it is $P_{k_2+1}$. Let $v_c$ ($\neq v_b$) be adjacent to $v_a$ on $P_{k_2+1}$. If $x_{v_a} \geq x_{v_r}$, then we shift the pendant paths from vertex $v_r$ to vertex $v_a$. The resultant graph will be in $\mathscr{T}_n^{k,4}$, and by Lemma \ref{lem3}, its $A_{\alpha}$-spectral radius is strictly greater than that of $G^*$, a contradiction. If $x_{v_a} < x_{v_r}$, we construct $G^{***} = G^* -\{v_a v_c\} + \{v_r v_c\}$. Then $G^{***} \in \mathscr{T}_n^{k,4}$, and by Lemma \ref{lem3}, $\rho_{\alpha}(G^{***}) > \rho_{\alpha}(G^{*})$, a contradiction. Hence the configuration of cycles in $G^*$ can be of type $(c)$ only.\par
    Finally, using the similar argument we used to prove Claim 2 in Lemma \ref{r3}, here, we can verify that for the paths $P_{k_1+1}, P_{k_2+1}$ and $P_{k_3+1}$, exactly one from $k_1, k_2$ and $k_3$ is equal to $1$, and the other two are equal to $2$.\par
    \begin{figure}[t]
    \centering
    \includegraphics[scale=.7]{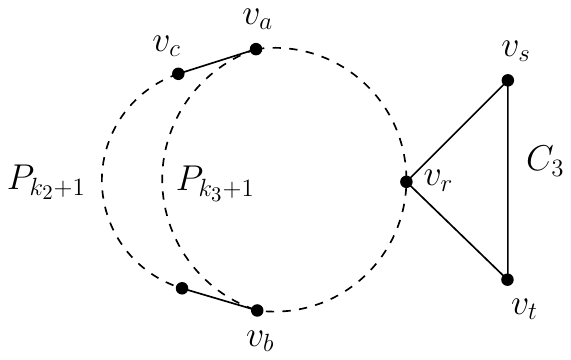}
    \caption{Type $(b)$ configuration of four cycles in $G^*$.}
    \label{f7}
    \end{figure}
    Combining all the above facts regarding the structure of $G^*$, we conclude that $G^* = \mathcal{T}_4$. This wraps up the proof.
\end{proof}

\begin{lemma}\label{r5}
    Let $\alpha \in [\frac{1}{2}, 1)$ and $n,k$ be integers with $1\leq k \leq n-7$. Then out of all graphs in $\mathscr{T}_n^{k,3}$, $\mathcal{T}_3$ stands out as the sole graph to have the largest $A_{\alpha}$-spectral radius.
\end{lemma}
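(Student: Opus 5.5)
Let $G^*$ be a graph with largest $A_{\alpha}$-spectral radius in $\mathscr{T}_n^{k,3}$ and let $X$ be its Perron vector. By Lemma \ref{r8}, $G^*$ is obtained from a cycle configuration $H$ with exactly three cycles by attaching $k$ pendant paths of nearly equal lengths at a single vertex $v$. The three cycles $C_a, C_b, C_c$ of $H$ pairwise share at most one vertex, and $H$ is one of finitely many shapes: cycles joined by paths or meeting at cut vertices, arranged in a chain or a star, possibly all at one common vertex. The plan has four steps: show every cycle is a triangle, show any two cycles touch, show all three share one vertex, and finally show the pendant paths sit at that common vertex.

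\emph{Step 1 (triangles).} Suppose some cycle has length at least $4$. Whatever the position of $v$, an arc of that cycle between branch vertices contains a subpath of length at least $2$ whose endpoints are non-adjacent. If the cycle has only one vertex of degree at least $3$, Lemma \ref{r9}(b) forces it to have length $3$ anyway. Otherwise Lemma \ref{r9}(a) gives a contradiction, as in Claim 2 of Lemma \ref{r3}.

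\emph{Step 2 (cycles touch).} Suppose two cycles are joined by a path $P_{q+1}$ with $q\ge1$, with end vertices $u$ and $w$. If $q\ge 2$, an internal subpath of length at least $2$ with non-adjacent ends exists unless $v$ lies inside it, so Lemma \ref{r9}(a) applies. That leaves $q=1$, or $q=2$ with $v$ the middle vertex. In either case I argue as in Claim 1 of Lemma \ref{r3}, comparing $x_u$ and $x_w$ (or $x_v$ against an endpoint). Lemma \ref{lem3} then moves the two cycle edges at the vertex with smaller Perron entry to the vertex with larger entry. This merges the connection and keeps the graph in $\mathscr{T}_n^{k,3}$, because the cycle count stays at $3$ since the cycles still share at most one vertex. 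The result has strictly larger spectral radius, a contradiction.

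\emph{Step 3 (common vertex).} Suppose the cycles form a chain $C_a-C_b-C_c$ through distinct cut vertices $w_1,w_2$ of $C_b=w_1w_2y$. Assuming $x_{w_1}\ge x_{w_2}$, move the two edges of $C_c$ at $w_2$ to $w_1$ by Lemma \ref{lem3}; if $v=w_2$, also move its pendant edge, or symmetrically move edges toward $w_2$. The new graph has all three triangles at $w_1$, stays in the class, and has larger $\rho_\alpha$. Hence $H=G_1$.

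\emph{Step 4 (position of $v$).} Finally, $v$ must be the center $c$ of $G_1$. If $v$ is a triangle vertex $\neq c$, then either $x_c\ge x_v$, and we shift the pendant paths to $c$; or $x_v>x_c$, and we shift to $v$ the edges from $c$ to the other two triangles. Both moves use Lemma \ref{lem3} and stay in $\mathscr{T}_n^{k,3}$, so $G^*=\mathcal{T}_3$.

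The main obstacle is Steps 2 and 3: enumerating the three-cycle configurations carefully, including the case where $v$ sits on a connecting path, and checking in each case that the Lemma \ref{lem3} move keeps exactly three cycles and keeps $k$ pendant vertices. Note that Lemma \ref{lem3} requires the moved neighbours to be non-adjacent to the receiving vertex, and this must be verified case by case. The hypothesis $\alpha\ge\frac12$ does not appear to be needed for this lemma itself; it only matters in Lemma \ref{r1}.
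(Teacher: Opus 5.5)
\textbf{Gap in Step 2.} Your two-edge move there does not stay in $\mathscr{T}_n^{k,3}$.

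Take $q=1$: $uw$ is a bridge with $u\in C_a$, $w\in C_c$, and say $x_u\ge x_w$. Deleting both $C_c$-edges $ww',ww''$ and joining $w',w''$ to $u$ does re-anchor $C_c$ at $u$. But the bridge $uw$ is still there, since the edge count is fixed. So in the generic case $d(w)=3$, the vertex $w$ becomes a \emph{new pendant vertex} hanging at $u$. The resulting graph lies in $\mathscr{T}_n^{k+1,3}$, and its larger $A_\alpha$-spectral radius contradicts nothing. The same failure occurs for $q=2$ with $v$ as the middle vertex when $x_v\ge x_w$. You checked that the cycle count stays at $3$, but not the pendant count, which is exactly the issue you flagged at the end.

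The repair is the paper's move for configuration (b): transfer only \emph{one} cycle edge, i.e.\ form $G^*-ww'+uw'$. Then $w$ keeps degree at least $2$, and the cycle through $w'$ is lengthened so that it absorbs the connecting path. The graph still has exactly three cycles and $k$ pendant vertices, so Lemma~\ref{lem3} gives the contradiction. This works for any connecting path whose interior avoids the cycles, for every $q\ge 1$ and every position of $v$. With this move you no longer need your internal-subpath case split, which is just as well, because that split is incomplete. Suppose the cycles are joined through a vertex $c$ lying on no cycle, with three spokes of length $1$, and $c\neq v$. Then the path $u_a c u_b$ has $q=2$, yet it contains no internal subpath of length $2$, since $c$ is a branch vertex; your dichotomy (``$q=1$, or $q=2$ with $v$ in the middle'') does not cover it.

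\textbf{Step 3 is a genuine improvement.} In the chain case the paper moves a single edge, which lands in $\mathscr{T}_n^{k,4}$. It must then invoke Lemma~\ref{r4} and Lemma~\ref{r1}, and the latter is the only source of the restriction $\alpha\ge\frac12$ in this lemma. Your move transfers both $C_c$-edges at $w_2$ to $w_1$. This is legitimate precisely because $w_2$ keeps its two $C_b$-edges, which is the feature missing in Step 2. The new graph has three cycles through $w_1$ and the same $k$ pendant vertices, so it stays in $\mathscr{T}_n^{k,3}$. Consequently, once Step 2 is repaired, your argument proves this lemma for all $\alpha\in[0,1)$ without Lemmas~\ref{r4} and \ref{r1}; only the final cross-class comparison in the theorem still needs Lemma~\ref{r1}. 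Steps 1 and 4 are fine. The paper simply does them in the reverse order, fixing the position of $v$ first and then obtaining triangles from Lemma~\ref{r9}(b).
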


\begin{proof}
    The configuration of three cycles, say $C_r, C_s$ and $C_t$ ($r,s,t \geq 3$), in any graph of $\mathscr{T}_n^{k,3}$, has seven possible cases as given in Figure \ref{f8}. A graph in $\mathscr{T}_n^{k,3}$ is obtained by attaching trees to some vertices of one of these seven graphs. Among all graphs in $\mathscr{T}_n^{k,3}$, let $G^*$ be having the largest $A_{\alpha}$-spectral radius. We denote the $A_{\alpha}$-Perron vector of $G^*$ by $X$.\par
    \begin{figure}[t]
    \centering
    \includegraphics[scale=.7]{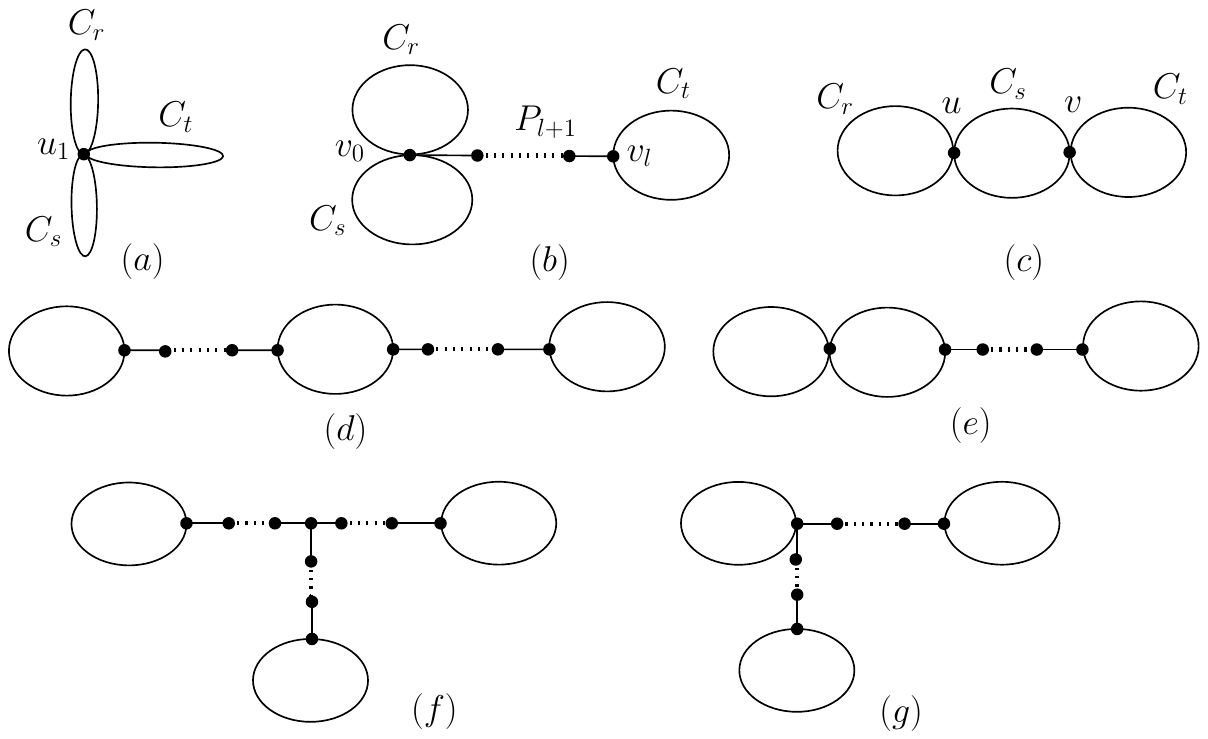}
    \caption{Seven possible configurations of three cycles in graphs of $\mathscr{T}_n^{k,3}$.}
    \label{f8}
    \end{figure}
    We first prove that the configuration of the cycles $C_r, C_s$ and $C_t$ in $G^*$ must be of type $(a)$ in Figure \ref{f8}. If possible let the configuration be of type $(b)$. In $(b)$, let $P_{l+1}=v_0 v_1 \ldots v_l$ ($l \geq 1$) be the path which connects two cycles, say $C_r$ and $C_t$, where $v_0$ is on $C_r$, and $v_l$ is on $C_t$. Also let $v_0^{\prime}$ be a neighbor of $v_0$ on $C_r$, and $v_l^{\prime}$ be a neighbor of $v_l$ on $C_t$. If $x_{v_0} \geq x_{v_l}$, we construct $G_1^* = G^* -\{ v_l v_l^{\prime}\} +\{v_0 v_l^{\prime} \}$. Then $G_1^* \in \mathscr{T}_n^{k,3}$, and by Lemma \ref{lem3}, $\rho_{\alpha} (G_1^*) > \rho_{\alpha}(G^*)$, a contradiction. On the other hand, if $x_{v_0} < x_{v_l}$, we construct $G_2^* = G^* -\{ v_0 v_0^{\prime}\} +\{v_l v_0^{\prime} \}$. Then $G_2^*$ is also in $\mathscr{T}_n^{k,3}$. By Lemma \ref{lem3}, $\rho_{\alpha} (G_2^*) > \rho_{\alpha}(G^*)$, a contradiction. Hence cycles in $G^*$ can not be arranged like $(b)$. Similarly we can show that the cycle configuration can not be of type $(d)$, $(e)$, $(f)$ or $(g)$. Now, if possible let the configuration be of type $(c)$. For convenience, let $u$ and $v$ respectively be the common vertices of $C_r$ and $C_s$, and $C_s$ and $C_t$, as shown in Figure \ref{f8}$(c)$. Let $u^{\prime}$ be a neighbor of $u$ in $C_r$, and $v^{\prime}$ be a neighbor of $v$ in $C_t$. Let $x_u \geq x_v$. We construct $G_3^*= G^* -\{v v^{\prime}\} +\{u v^{\prime}\}$. Then by Lemma \ref{lem3}, $\rho_{\alpha} (G_3^*) > \rho_{\alpha}(G^*)$, but interestingly $G_3^* \in \mathscr{T}_n^{k,4}$. Applying Lemma \ref{r4} here, we get $\rho_{\alpha} (\mathcal{T}_4) \geq \rho_{\alpha}(G_3^*)$. So $\rho_{\alpha} (\mathcal{T}_4) > \rho_{\alpha}(G^*)$. From Lemma \ref{r1}, we already know that $\rho_{\alpha} (\mathcal{T}_3) > \rho_{\alpha}(\mathcal{T}_4)$, which finally implies $\rho_{\alpha} (\mathcal{T}_3) > \rho_{\alpha}(G^*)$. Since $\mathcal{T}_3$ is in $\mathscr{T}_n^{k,3}$, we arrive at a contradiction. If $x_u < x_v$, then in the similar way as above, we reach to a contradiction. Therefore it is not possible to have configuration $(c)$ in $G^*$. Hence the configuration of the cycles in $G^*$ must be of type $(a)$.\par
    Let $u_1$ be the vertex common to all three cycles in $G^*$ as shown in Figure \ref{f8}$(a)$. From Lemma \ref{r8}, it follows that $G^*$ is obtained by attaching $k$ pendant paths of nearly equal lengths to a unique vertex of $(a)$. This vertex is none other than $u_1$, which we will justify now. If possible let the pendant paths be attached to a vertex $u_2$ ($\neq u_1$) on a cycle, say $C_r$. On the attached paths, we take a vertex $u_3$, which is adjacent to $u_2$. On a cycle other than $C_r$, say $C_s$, we take a vertex $u_4$ which is adjacent to $u_1$. Now if $x_{u_1} \geq x_{u_2}$, then the process of removal of the edge $u_2 u_3$ and then addition of the edge $u_1 u_3$ to $G^*$ increase the $A_{\alpha}$-spectral radius by Lemma \ref{lem3}, keeping the resultant graph still in $\mathscr{T}_n^{k,3}$, a contradiction. If $x_{u_1} < x_{u_2}$, then removing the edge $u_1 u_4$ and adding the edge $u_2 u_4$ will increase the $A_{\alpha}$-spectral radius by Lemma \ref{lem3}, although the resultant graph will be still in $\mathscr{T}_n^{k,3}$, leading to a contradiction again. Hence $G^*$ is obtained by attaching $k$ pendant paths of nearly equal lengths to the vertex common to all three cycles in $(a)$. Further, using Lemma \ref{r9}(b), we can confirm that the three cycles $C_r, C_s$ and $C_t$ in $G^*$ are of length exactly three each.\par
    Thus we can conclude that $G^* = \mathcal{T}_3$, and hence the result follows. 
\end{proof}

\begin{proof}[Proof of Theorem \ref{r6}]
    Combining Lemma $\ref{r1}$ and Lemmas $\ref{r2}-\ref{r5}$, the final result follows directly.
\end{proof}


\section{Concluding remarks}
We proved Lemmas $\ref{r2}-\ref{r4}$ for all values of $\alpha \in [0,1)$. However, we could not do so for Lemma \ref{r1}; we have proved the result therein for $\alpha \in [\frac{1}{2},1)$. Proof of Lemma \ref{r5} uses Lemma \ref{r1}, thus Lemma \ref{r5} has been proved for $\alpha \in [\frac{1}{2},1)$. So if one succeeds to prove that the statement of Lemma \ref{r1} is true for $\alpha \in [0,\frac{1}{2})$ also, then the general version of Theorem \ref{r6} can be obtained for $\alpha \in [0,1)$. Moreover, It can be noted that the problem of characterizing the graphs with largest $A_{\alpha}$-spectral radius ($\alpha \in [0,1)$) in the whole class of tricyclic graphs is still open.


\bigskip \noindent
\textbf{\Large Acknowledgment:} The authors are thankful to the anonymous referee for the valuable comments and suggestions which improved the presentation of the paper.

\section*{Statements and Declarations} 

\textbf{Competing interests:} The authors disclose that they have no conflicting interest.\par\noindent
\textbf{Data availability:} The research described in the article did not utilize any data.\par\noindent
\textbf{Funding information:} There is no funding source.


\bibliographystyle{plain}
\bibliography{bibliotri}
\end{document}